\tikzset{mynode/.style={draw,solid,circle,inner sep=1pt}}
\patchcmd{\paragraph}{\itshape}{\bfseries\boldmath}{}{} 
\def\@citecolor{blue}%
\def\@urlcolor{blue}%
\def\@linkcolor{blue}%
\def\orcidID#1{\smash{\href{http://orcid.org/#1}{\protect\raisebox{-1.25pt}{\protect\includegraphics{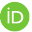}}}}}
\begin{document}

\title{Unbounded-Time Safety Verification of Stochastic Differential Dynamics\thanks{This work was partially funded by NSFC under grant No.\ 61625206, 61732001 and 61872341, by the ERC Advanced Project FRAPPANT under grant No.~787914, by the US NSF under grant No.~CCF 1815983 and by the CAS Pioneer Hundred Talents Program under grant No. Y8YC235015.}
}
\titlerunning{Unbounded-Time Safety Verification of Stochastic Differential Dynamics}


\author{
Shenghua Feng\inst{1,2}$^{\text{(\Letter)}}$\orcidID{0000-0002-5352-4954}
\and
Mingshuai Chen\inst{3}$^{\text{(\Letter)}}$\orcidID{0000-0001-9663-7441}
\and
Bai Xue\inst{1,2}$^{\text{(\Letter)}}$\orcidID{0000-0001-9717-846X} 
\and
Sriram Sankaranarayanan\inst{4}$^{\text{(\Letter)}}$\orcidID{0000-0001-7315-4340}
\and
Naijun Zhan\inst{1,2}$^{\text{(\Letter)}}$\orcidID{0000-0003-3298-3817}
}
\authorrunning{S. Feng et al.}
\institute{
SKLCS, Institute of Software, CAS, Beijing, China
\and
University of Chinese Academy of Sciences, Beijing, China\\
\email{\{fengsh,xuebai,znj\}@ios.ac.cn}
\and
Lehrstuhl f{\"u}r Informatik 2, RWTH Aachen University, Aachen, Germany\\
\email{chenms@cs.rwth-aachen.de}
\and
University of Colorado, Boulder, USA\\
\email{sriram.sankaranarayanan@colorado.edu}
}

\maketitle

\setcounter{footnote}{0}


\begin{abstract}

In this paper, we propose a method for bounding the probability that a stochastic differential equation (SDE) system violates a safety specification over the infinite time horizon. SDEs are mathematical models of stochastic processes that capture how states evolve continuously in time. They are widely used in numerous applications such as engineered systems (e.g., modeling how pedestrians move in an intersection), computational finance (e.g., modeling stock option prices), and ecological processes (e.g., population change over time). Previously the safety verification problem has been tackled over finite and infinite time horizons using a diverse set of approaches. The approach in this paper attempts to connect the two views by first identifying a finite time bound, beyond which the probability of a safety violation can be bounded by a negligibly small number. This is achieved by discovering an exponential barrier certificate that proves exponentially converging bounds on the probability of safety violations over time. Once the finite time interval is found, a finite-time verification approach is used to bound the probability of violation over this interval. We demonstrate our approach over a collection of interesting examples from the literature, wherein our approach can be used to find tight bounds on the violation probability of safety properties over the infinite time horizon.

\keywords{Stochastic Differential Equations (SDEs) \and Unbounded safety verification \and Failure probability bound \and Barrier certificates.}
\end{abstract}

\section{Introduction}\label{sec:intro}

In this paper, we investigate the problem of verifying probabilistic safety
properties for continuous stochastic dynamics modeled by stochastic differential equations (SDEs). The study of SDEs dates back to the 1900s when, e.g., Einstein used SDEs to model the phenomenon of Brownian motion~\cite{Einstein/1906/Theory}.
Since then, SDEs have witnessed numerous applications including models of disturbances in engineered systems ranging from wind forces~\cite{Wang+Others/2015/Long} to pedestrian motion~\cite{Hoogendoorn+Others/2004/Pedestrian}; models of financial instruments such as options~\cite{Black+Scholes/1973/Pricing}; and models of biological/ecological processes for instance predator-prey models~\cite{panik2017stochastic}. In the meantime, SDEs are hard to reason about:  they are defined using ideas from stochastic calculus that reimagine basic concepts such as integration in order to conform to the basic laws of probability and stochastic processes~\cite{oksendal2013stochastic}.

There are many important verification problems for SDEs. Prominent topics
include the safety verification problem which seeks to know the probability that
a given SDE with specified initial conditions will enter
an unsafe region (or leave a safe region) over a given
time horizon. Generally, safety verification can be performed
over a finite-time horizon setting, wherein the probability
is sought over a finite time interval $[0, T]$. On the other hand,
the infinite-time horizon problem seeks a bound on the probability
of satisfying a safety property over the unbounded time horizon
$[0, \infty)$. A handful of methods have been proposed for verifying SDE systems, such as the barrier certificate-based methods over both the infinite time horizon~\cite{prajna2004stochastic} and finite time horizons~\cite{steinhardt2012finite}, the moment optimization-based method over
finite time horizons~\cite{sloth2015safety} and the Hamilton-Jacobi-based
method over the infinite time horizon~\cite{Koutsoukos2008}. 
The novelty of our work lies in the reduction of infinite-time
horizon verification problems to finite time problems.
%

In this paper, we propose a novel reduction-based method to verify unbounded-time safety properties of stochastic systems modeled as nonlinear polynomial SDEs. We employ a similar idea as in~\cite{feng2019taming} (for verifying delay differential equations) that reduces the safety verification problem over the infinite time horizon to the one over a finite time interval. This is achieved by computing an \emph{exponential stochastic barrier certificate} which witnesses an exponentially decreasing upper bound on the probability that a target system violates a given safety specification. Consequently, for any $\epsilon >0$, we can identify a time instant $T$ beyond which the violation (a.k.a. failure) probability is smaller than the negligibly small cutoff $\epsilon$. The reduced bounded-time safety verification problem over $[0, T]$ can hence be tackled by any of the available methods. We furthermore present an alternative method to address the reduced finite-time horizon verification problem based on the discovery of a \emph{time-dependent stochastic barrier certificate}. 
We show that both the exponential and the time-dependent stochastic barrier certificate can be synthesized by respectively solving a pertinent \emph{semidefinite programming} (SDP)~\cite{wolkowicz2012handbook} optimization problem. Experimental results on some interesting examples taken from the literature demonstrated the effectiveness of the reduction and that our method often produces tighter bounds on the failure probability.
Our approach has some broad similarities to related approaches in symbolic execution of probabilistic programs that conclude facts about infinitely many behaviors by analyzing finitely many paths in the program that account for a sufficient probability among all the behaviors~\cite{DBLP:conf/pldi/SankaranarayananCG13}.
%

\paragraph*{Contributions.}
The main contributions of this work can be summarized as follows: (1) We reduce
the unbounded-time safety verification of stochastic systems to a bounded
one, based on an exponentially decreasing bound on the failure probability which guarantees the dominance of the overall failure probability by the truncated finite time horizon. (2) We show how the obtained bound on the overall failure
probability is tighter than that produced by existing methods for some interesting SDEs.

\paragraph*{Related Work.}
The use of mathematical models of processes --ranging from
finite state machines to various types of differential equations--
has allowed us to reason about rich behaviors of Cyber-Physical
Systems produced by the interaction between digital computers and physical plants~\cite{rajkumar2010cyber}. In this regard,
many modeling formalisms have been studied including
finite state machines, ordinary differential equations (ODEs),
timed automata, hybrid automata, etc.~\cite{Deshmukh_Sankaranarayanan__2019__Formal}, on top of which a large variety of verification problems have been extensively investigated, e.g., safety verification through reachability 
analysis and temporal logic verification~\cite{Baier+Katoen/2008/Principles}.

In the existing literature on formal verification, ODEs are often used to describe the behavior of
deterministic continuous-time systems. However, these models have been shown over-simplistic
in many applications that involve time delays, nondeterministic inputs and stochastic
noises. 
SDEs hence arose as an important class of models that have been employed in practical
domains covering, among others~\cite{oksendal2013stochastic}, financial models such as the famous
Black-Scholes model used extensively in the theory of options
pricing~\cite{Black+Scholes/1973/Pricing}, wind
disturbances~\cite{Wang+Others/2015/Long}, human pedestrian
motion~\cite{Hoogendoorn+Others/2004/Pedestrian} and ecological
models~\cite{panik2017stochastic}.

In what follows, we place our work in the context of formal verification techniques tailored for stochastic differential dynamics modeled as SDEs, and discuss contributions thereof that are highly related to our approach. Unbounded-time stochastic safety verification of SDE systems was first studied by Prajna et al. in~\cite{prajna2004stochastic,DBLP:journals/tac/PrajnaJP07}, where a typical supermartingale was employed as a stochastic barrier certificate followed by
computational conditions derived from Doob's martingale
inequality~\cite{karatzas2014brownian}. Thereafter, the stochastic
barrier certificate-based method was extended to cater for bounded-time safety
verification by Steinhardt and Tedrake~\cite{steinhardt2012finite} by leveraging a relaxed formulation called $c$-martingale for locally stable systems. The barrier
certificate-based method by Prajna et al. (ibid.) for unbounded-time
safety verification often leads to conservative bound on the failure probability. On the other hand,
Steinhardt and Tedrake (ibid.) established impressive probability bounds but only for
finite time horizons. In order to
reduce the conservativeness, we propose a method of reducing the unbounded
safety verification to a bounded one. Although our method in this
paper is also based on the construction of stochastic barrier
certificates, the gain of stochastic barrier certificates only helps
to identify a finite time interval such that the violation
probability of interest beyond this time interval is arbitrarily negligibly small. A
time-dependent barrier certificate is further proposed to solve the
resulting bounded-time safety verification. The Unbounded-time safety verification problem
has also been studied by Koutsoukos and Riley~\cite{Koutsoukos2008}, who linked the
reachability probability to the viscosity solution of certain
Hamilton-Jacobi partial differential equations, under restrictions on bounded state space and non-degenerate diffusion. Grid-based numerical
approaches, e.g., the finite difference method in~\cite{Koutsoukos2008} and the level set method in~\cite{MT05}, are traditionally used to solve these equations, leading to
the fact that the Hamilton-Jacobi reachability method only scales well
to systems of special structures. More recently, a novel constraint solving-based method has been proposed in~\cite{liureachability} for algebraically over- and under-approximating the reachability probability, which is nevertheless limited to bounded-time safety verification. In addition to the abovementioned methods, we refer the readers to~\cite{Bujorianu04} for a Dirichlet form-based method for stochastic hybrid systems featuring ``nice'' Markov properties, while to~\cite{Younes+Simmons/2002/Probabilistic,BBK07,Lygeros10} and~\cite{APLS08,KD01} respectively for related contributions in statistical and discrete/numerical methods for stochastic verification and control.

	Finally, we mention a relation between the ideas in this paper and previously proposed ideas for (non-stochastic) ODEs due to Sogokon et al.~\cite{Sogokon+Others/2018/Vector}. The key similarity lies in the use of a non-negative matrix through which a vector of functions whose derivatives are related to their current value. Whereas Sogokon et al. explored this idea for ODEs, we do so for SDEs. Another significant difference, in our work, is that we use the super-martingale functions to identify a time horizon $[0, T]$ and bound the probability of safety violation beyond $T$.

The reminder of this paper is structured as follows. Sect.~\ref{sec:preliminaries} introduces stochastic differential
dynamics modeled by SDEs and the unbounded-time safety verification problem of
interest. Sect.~\ref{sec:reduction} elucidates the reduction of unbounded safety verification to bounded ones based on the witness of stochastic barrier certificates. Sect.~\ref{sec:synthesis} presents the SDP
formulation for discovering such barrier certificates over the reduced  bounded time interval. After demonstrating our method on several examples in Sect.~\ref{sec:experiments}, we conclude the paper in Sect.~\ref{sec:conclusion}.
\section{Problem Formulation}\label{sec:preliminaries}

\paragraph*{Notations.}
Let $\RR$ be the set of real numbers. For a vector $\xx \in \RR^n$, $x_i$ refers to its $i$-th component and $\norm{\xx}$ denotes the $\ell^2$-norm. Particularly, $\zz$ and $\one$ denote respectively the vector of zeros and ones of appropriate dimension, and the comparison between vectors, e.g., $\xx \le \zz$, is component-wise. We define for $\delta > 0$, $\ball(\xx,\delta) \define \{\xx' \in \RR^n \mid \norm{\xx' - \xx} \le \delta\}$ as the $\delta$-closed ball centered at  $\xx$. We abuse the notation $\norm{\cdot}$ for an $m \times n$ matrix $M$ as $\norm{M} \define \sqrt{\sum_{i=1}^m \sum_{j=1}^n \norm{M_{i j}}^2}$. The exponential of a square matrix $M \in \RR^{n \times n}$, denoted by $\exp{M}$, is the $n \times n$ matrix given by the power series $\exp{M} \define \sum_{k=0}^\infty \frac{1}{k!} M^k$. For a set $\mathcal{X} \subseteq \RR^n$, $\boundary \mathcal{X}$, $\closure{\mathcal{X}}$ and $\mathcal{X}^\interior$ denote respectively the boundary, the closure and the interior of $\mathcal{X}$. Let $C^k$ be the space of functions on $\RR$ with continuous derivatives up to order $k$; a function $f(t, \xx)\from \RR \times \RR^n \to \RR$ is in $C^{1,2}(\RR \times \RR^n)$ if $f \in C^1$ w.r.t. $t \in R$ and $f \in C^2$ w.r.t. $\xx \in \RR^n$.

Let $(\Omega, \mathcal{F}, P)$ be a probability space, where $\Omega$ is a sample space, $\mathcal{F} \subseteq 2^\Omega$ is a $\sigma$-algebra on $\Omega$, and $P\from \mathcal{F} \to [0, 1]$ is a probability measure on the measurable space $(\Omega, \mathcal{F})$. A \emph{random variable} $X$ defined on the probability space $(\Omega, \mathcal{F}, P)$ is an $\mathcal{F}$-measurable function $X\from \Omega \to \RR^n$; its $\emph{expectation}$ (w.r.t. $P$) is denoted by $E[X]$. Every random variable $X$ induces a probability measure $\mu_X\from \mathcal{B} \to [0,1]$ on $\RR^n$, defined as $\mu_X(B) \define P(X^{-1}(B))$ for Borel sets $B$ in the Borel $\sigma$-algebra $\mathcal{B}$ on $\RR^n$. $\mu_X$ is called the \emph{distribution of $X$}; its \emph{support set} is $\supp(\mu_X) \define \overline{\bigcup_{\mu_X(B) > 0} B}$, which will also be referred to as the support of $X$.

A (continuous-time) \emph{stochastic process} is a parametrized collection of random variables $\{X_t\}_{t \in T}$ where the parameter space $T$ is interpreted as, unless explicitly notated in this paper, the halfline $[0, \infty)$. We sometimes further drop the brackets in $\{X_t\}$ when it is clear from the context. A collection $\{\mathcal{F}_t \mid t \ge 0\}$ of $\sigma$-algebras of sets in $\mathcal{F}$ is a \emph{filtration} if $\mathcal{F}_t \subseteq \mathcal{F}_{t+s}$ for $t, s \in [0, \infty)$. Intuitively, $\mathcal{F}_t$ carries the information known to an observer at time $t$. A random variable $\tau\from \Omega \to [0, \infty)$ is called a \emph{stopping time} w.r.t. some filtration $\{\mathcal{F}_t \mid t \ge 0\}$ of $\mathcal{F}$ if $\{\tau \le t\} \in \mathcal{F}_t$ for all $t \ge 0$. A stochastic process $\{X_t\}$ adapted to a filtration $\{\mathcal{F}_t \mid t \ge 0\}$ is called a \emph{supermartingale} if $E[X_t] < \infty$ for any $t \ge 0$ and $E[X_t \mid \mathcal{F}_s] \le X_s$ for all $0 \le s \le t$. That is, the conditional expected value of any future observation, given all the past observations, is no larger than the most recent observation.

\paragraph*{Stochastic Differential Dynamics.}
We consider a class of dynamical systems featuring stochastic differential dynamics governed by time-homogeneous SDEs of the form\footnote{The general time-inhomogeneous case with time-dependent $b$ and $\sigma$ can be reduced to this form (cf.~\cite[Chap.~10]{oksendal2013stochastic}).}
\begin{equation}\label{eq:sde}
\dif X_t = b(X_t) \dif t+\sigma(X_t) \dif W_t, \quad t \ge 0
\end{equation}%
where $\{X_t\}$ is an $n$-dimensional continuous-time stochastic process, $\{W_t\}$ denotes an $m$-dimensional Wiener process (standard Brownian motion), $b\from \RR^n \to \RR^n$ is a vector-valued polynomial flow field (called the \emph{drift coefficient}) modeling deterministic evolution of the system, and $\sigma\from \RR^n \to \RR^{n \times m}$ is a matrix-valued polynomial flow field (called the \emph{diffusion coefficient}) that encodes the coupling of the system to Gaussian white noise $\dif W_t$.

Suppose there exists a Lipschitz constant $D$ s.t. $\norm{b(\xx) - b(\yy)} + \norm{\sigma(\xx) - \sigma(\yy)} \le D \norm{\xx - \yy}$ holds for all $\xx, \yy \in \RR^n$. Then, given an initial state (a random variable) $X_0$, an SDE of the form~\eqref{eq:sde} has 
a unique \emph{solution} which is a stochastic process $X_t(\omega) = X(t, \omega)\from [0, \infty) \times \Omega \to \RR^n$ satisfying the stochastic integral equation (\`{a} la It\^{o}'s interpretation)
\begin{equation}\label{eq:ito-sol}
X_t = X_0 + \int_0^t b(X_s) \dif s +\int_0^t \sigma(X_s) \dif W_s.
\end{equation}%
The solution $\{X_t\}$ in Eq.~\eqref{eq:ito-sol} is also referred to as an \emph{(It\^{o}) diffusion process}, and will be denoted by $X_t^{0, X_0}$ (or simply $X_t^{X_0}$), if necessary, to indicate the initial condition $X_0$ at $t = 0$.

A great deal of information about a diffusion process can be encoded in a partial differential operator termed the \emph{infinitesimal generator}, which generalizes the Lie derivative that captures the evolution of a function along the diffusion process:

\begin{definition}[Infinitesimal generator~\cite{oksendal2013stochastic}]
	Let $\{X_t\}$ be a (time-homogeneous) diffusion process in $\mathbb{R}^n$. The \emph{infinitesimal generator $\mathcal{A}$ of $X_t$} is defined by 
	\begin{equation*}
	\mathcal{A} f(s, \xx) = \lim_{t \downarrow 0} \frac{E^{s, \xx} \left[ f(s+t, X_t) \right] - f(s, \xx)}{t}, \quad \xx \in \mathbb{R}^n.
	\end{equation*}%
	The set of functions $f\from \mathbb{R}\times\mathbb{R}^n \to \mathbb{R}$ s.t. the limit exists at $(s, \xx)$ is denoted by $\mathcal{D}_\mathcal{A}(s, \xx)$, while $\mathcal{D}_\mathcal{A}$ denotes the set of functions for which the limit exists for all $(s, \xx) \in \RR \times \mathbb{R}^n$.
\end{definition}

In subsequent sections, the readers may find applications of the operator $\mathcal{A}$ to a vector-valued function in a component-wise manner. The relation between $\mathcal{A}$ and the coefficients $b, \sigma$ in SDE~\eqref{eq:sde} is captured by the following result:

\begin{lemma}[\cite{oksendal2013stochastic}]
	Let $\{X_t\}$ be a diffusion process defined by Eq.~\eqref{eq:sde}. If $f \in C^{1, 2}(\mathbb{R}\times\mathbb{R}^n)$ with compact support, then $f \in  \mathcal{D}_\mathcal{A}$ and
	\begin{equation*}
	\mathcal{A}f(t,\xx) = \frac{\partial f}{\partial t} + \sum_{i=1}^n b_i(\xx)\frac{\partial f}{\partial x_i} + \frac{1}{2}\sum_{i,j}(\sigma \sigma^\trans)_{ij}\frac{\partial^2 f}{\partial x_i\partial x_j}.
	\end{equation*}
\end{lemma}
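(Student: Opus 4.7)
The plan is to apply It\^{o}'s formula to the space--time function $f(s+u, X_u)$ on the interval $u \in [0, t]$ (with $X_0 = \xx$), then take expectations, and finally divide by $t$ and pass to the limit $t \downarrow 0$. Denoting the candidate expression on the right-hand side by
\[
Lf(s,\xx) \define \frac{\partial f}{\partial t}(s,\xx) + \sum_{i=1}^n b_i(\xx)\frac{\partial f}{\partial x_i}(s,\xx) + \frac{1}{2}\sum_{i,j}(\sigma\sigma^\trans)_{ij}(\xx)\frac{\partial^2 f}{\partial x_i \partial x_j}(s,\xx),
\]
It\^{o}'s formula applied to $f \in C^{1,2}(\RR \times \RR^n)$ along the diffusion~\eqref{eq:sde} gives
\[
f(s+t, X_t) - f(s, \xx) = \int_0^t Lf(s+u, X_u)\,du + \int_0^t \left(\nabla_\xx f(s+u, X_u)\right)^\trans \sigma(X_u)\,dW_u.
\]

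Next I would argue that the It\^{o} integral is a zero-mean martingale. Because $f$ has compact support, $\nabla_\xx f$ is bounded and vanishes outside a compact set $K \subseteq \RR \times \RR^n$; since $\sigma$ is continuous (indeed polynomial), $\sigma$ is bounded on the $\xx$-projection of $K$. Hence the integrand $\nabla_\xx f \cdot \sigma$ is uniformly bounded in $(u, \omega)$, so its squared norm is integrable, which guarantees the It\^{o} integral belongs to $\mathcal{M}^2$ and has zero expectation. Taking $E^{s,\xx}[\cdot]$ on both sides and applying Fubini's theorem (again justified by boundedness of $Lf$) yields
\[
E^{s,\xx}\!\left[f(s+t, X_t)\right] - f(s, \xx) = \int_0^t E^{s,\xx}\!\left[Lf(s+u, X_u)\right] du.
\]

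Finally, I would divide by $t$ and let $t \downarrow 0$. The sample paths $u \mapsto X_u$ are continuous with $X_0 = \xx$ almost surely, and $Lf$ is continuous and bounded (sums and products of bounded continuous functions of $(s+u, X_u)$). By bounded convergence, $E^{s,\xx}[Lf(s+u, X_u)] \to Lf(s, \xx)$ as $u \downarrow 0$, so the time-average of a continuous function at $0$ converges to its value at $0$:
\[
\lim_{t \downarrow 0} \frac{1}{t}\int_0^t E^{s,\xx}\!\left[Lf(s+u, X_u)\right] du = Lf(s, \xx).
\]
This shows the defining limit for $\mathcal{A}f(s,\xx)$ exists at every $(s, \xx) \in \RR \times \RR^n$ and equals $Lf(s,\xx)$, proving both $f \in \mathcal{D}_\mathcal{A}$ and the claimed formula.

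The main obstacle is the clean justification of the two exchange-of-limit steps, namely that the It\^{o} integral contributes nothing in expectation and that the bounded-convergence argument applies on the right-hand side; both hinge precisely on the compact-support hypothesis on $f$, which trims the unbounded state space to a compact region on which the polynomial coefficients $b, \sigma$ are bounded. Without compact support one would need localization via stopping times $\tau_N = \inf\{u \ge 0 \mid \norm{X_u} \ge N\}$ and growth conditions to pass to the limit, which the hypothesis mercifully bypasses.
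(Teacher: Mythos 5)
Your proof is correct and is precisely the standard derivation of the generator formula found in the cited reference (\O ksendal): It\^{o}'s formula, the vanishing expectation of the stochastic integral (justified by the compact support of $f$ making the integrand bounded), and a continuity/bounded-convergence passage to the limit $t \downarrow 0$. The paper itself states this lemma without proof, deferring to that reference, so there is nothing to compare beyond noting that your argument, including the observation that compact support is exactly what spares you a localization via stopping times, is the intended one.
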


As a stochastic generalization of the Newton-Leibniz axiom, Dynkin's formula gives the expected value of any adequately smooth function of an It\^{o} diffusion at a stopping time:

\begin{theorem}[Dynkin's formula~\cite{dynkin1965markov}]\label{thm:dynkin}
	Let $\{X_t\}$ be a diffusion process in $\mathbb{R}^n$. Suppose $\tau$ is a stopping time with $E[\tau] < \infty$, and $f\in C^{1, 2}(\mathbb{R}\times\mathbb{R}^n)$ with compact support. Then
	\begin{equation*}
	E^{h, \xx} \left[f(\tau,X_\tau) \right] = f(h,\xx) + E^{h, \xx} \left[ \int_0^\tau \mathcal{A}f(s,X_s) \dif s \right].
	\end{equation*}
\end{theorem}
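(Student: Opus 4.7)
The plan is to obtain Dynkin's formula as a consequence of It\^o's formula together with a martingale/optional-stopping argument, and then to pass to the limit using the hypotheses that $f$ has compact support and $E[\tau] < \infty$. First I would apply It\^o's lemma to the smooth function $f$ along the diffusion $\{X_t\}$ starting from $(h, \xx)$, obtaining
\begin{equation*}
f(h+t, X_t) \;=\; f(h, \xx) \;+\; \int_0^t \mathcal{A}f(h+s, X_s)\dif s \;+\; \int_0^t \nabla_{\xx} f(h+s, X_s)^\trans \sigma(X_s)\dif W_s,
\end{equation*}
where the drift-plus-second-order term has been repackaged using the explicit formula for $\mathcal{A}$ given in the preceding lemma. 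This is the fundamental identity that links the pathwise behavior of $f \circ X$ to its infinitesimal generator.

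Next I would truncate the stopping time by putting $\tau_N \define \tau \wedge N$ for each $N \in \NN$; then $\tau_N$ is a bounded stopping time, so substituting $t = \tau_N$ and taking $E^{h,\xx}[\cdot]$ is legitimate. Crucially, because $f$ has compact support, $\nabla_\xx f$ is bounded and has bounded support, so the integrand of the It\^o integral is bounded; hence the stochastic integral is a genuine (square-integrable) martingale, and the optional stopping theorem yields
\begin{equation*}
E^{h,\xx}\!\left[\int_0^{\tau_N} \nabla_\xx f(h+s, X_s)^\trans \sigma(X_s)\dif W_s\right] \;=\; 0.
\end{equation*}
Taking expectations in the It\^o identity at $t = \tau_N$ therefore gives the desired formula with $\tau$ replaced by $\tau_N$.

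Finally, I would pass to the limit $N \to \infty$ using dominated convergence on both sides. For the left-hand side, compact support of $f$ yields $|f(h+\tau_N, X_{\tau_N})| \le \|f\|_\infty$, an integrable dominating constant, and $\tau_N \uparrow \tau$ almost surely so $f(h+\tau_N, X_{\tau_N}) \to f(h+\tau, X_\tau)$ pointwise (continuity of $f$ and right-continuity of sample paths). For the integral term, $\mathcal{A}f$ is also bounded by some constant $K$ due to compact support and continuity, and $\bigl|\int_0^{\tau_N}\mathcal{A}f(h+s,X_s)\dif s\bigr| \le K\tau$, which is integrable precisely because $E[\tau] < \infty$; monotone/dominated convergence then delivers the limit $E^{h,\xx}\bigl[\int_0^{\tau}\mathcal{A}f(h+s,X_s)\dif s\bigr]$. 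A trivial shift $s \mapsto s$ and relabeling reconcile with the statement's convention writing $\mathcal{A}f(s, X_s)$.

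The main obstacle is the limit step: one must verify that the vanishing of the stochastic integral under expectation survives the passage $\tau_N \to \tau$, and that the Lebesgue integral term converges despite $\tau$ being only integrable rather than bounded. Both are handled here by exploiting compact support of $f$ (which uniformly bounds $f$, $\nabla_\xx f$ and $\mathcal{A}f$) and by the hypothesis $E[\tau] < \infty$; without either, the identity can fail, which is why both are built into the hypotheses.
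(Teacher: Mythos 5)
Your proof is correct. The paper states Dynkin's formula without proof, as a classical result cited from~\cite{dynkin1965markov}, and your argument---It\^o's formula, truncation to the bounded stopping time $\tau \wedge N$, vanishing of the expected stochastic integral because compact support of $f$ makes the integrand $\nabla_\xx f^\trans \sigma$ bounded, and a final dominated-convergence passage using $\abs{f} \le \lVert f\rVert_\infty$ and $\bigl|\int_0^{\tau_N}\mathcal{A}f\,\dif s\bigr| \le K\tau$ with $E[\tau]<\infty$---is precisely the standard textbook derivation found in the cited sources. No gaps.
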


In order to specify the behavior of an It\^{o} diffusion across the domain boundary, we introduce the concept of \emph{stopped process}, which is a stochastic process that  is forced to have the same value after a prescribed (possibly random) time. 

\begin{definition}[Stopped process~\cite{gallager2013stochastic}]
	Given a stopping time $\tau$ and a stochastic process $\{X_t\}$, the \emph{stopped process} $\{X_t^\tau\}$ is defined by 
	\begin{equation*}
	X^\tau(t,\omega) \define X_{t \wedge \tau}(\omega)=
	\begin{cases}
	X(t,\omega) & \text{if } t\leq \tau(\omega),\\
	X(\tau(\omega),\omega) & \text{otherwise}.
	\end{cases}
	\end{equation*}
\end{definition}

\begin{remark}
By definition, a stopped process preserves, among others, continuity and the Markov property, and hence the aforementioned results on a stochastic process apply also to a stopped process.
\end{remark}

Now consider a stochastic system modeled by an SDE of the form~\eqref{eq:sde} that evolves ``within'' a not necessarily bounded set $\mathcal{X} \subseteq \RR^n$. Since the solution $\{X_t\}$ of Eq.~\eqref{eq:sde} may escape from $\mathcal{X}$ at any time instant $t > 0$, due to the unbounded nature of Gaussian, we define a stopped process $\tilde{X}_t \define X_{t\wedge\tau_\mathcal{X}}$ with $\tau_\mathcal{X} \define \inf\{t \mid X_t\notin \mathcal{X}\}$. $\tilde{X}_t$ hence represents the process that will stop at the boundary of $\mathcal{X}$. Denote the infinitesimal generator of the stopped process as $\tilde{\mathcal{A}}$. One plausible property here is that, for all compactly-supported $f \in C^{1,2}(\RR \times \mathbb{R}^n)$, 
\begin{equation}\label{eq:equal-generator}
\tilde{\mathcal{A}}f(t,\xx) =
\begin{cases}
\mathcal{A} f(t, \xx)& \text{for } \xx \in \mathcal{X}^\interior,\\
\frac{\partial f}{\partial t}(t, \xx)& \text{for } \xx \in \boundary \mathcal{X}.
\end{cases}
\end{equation}

\paragraph*{The $\infty$-Safety Problem.}
Given an SDE of the form~\eqref{eq:sde}, a (not necessarily bounded\footnote{In practice, if we can specify $\mathcal{X}$ based on prior knowledge when modeling a physical system, then the larger $\mathcal{X}$ we choose, the greater (bound on) failure probability we will obtain.}) domain set $\mathcal{X} \subseteq \RR^n$, an initial set $\mathcal{X}_0 \subset \mathcal {X}$, and an unsafe set $\mathcal{X}_u \subset \mathcal{X}$. We aim to bound the failure probability 
\begin{equation*}
	P\left(\exists t \in [0, \infty)\colon \tilde{X}_t \in \mathcal{X}_u \right),
\end{equation*}
for any initial state $X_0$ whose support lies within $\mathcal{X}_0$.
Accordingly, the \emph{$T$-safety problem}, with $T < \infty$, refers to the problem where one aims to bound the failure probability within the finite time horizon $[0, T]$.

\begin{remark}
	Roughly speaking, if we denote by $\phi$ the proposition ``$\tilde{X}_t$ evolves within $\mathcal{X}$'' and by $\psi$ the proposition ``$\tilde{X}_t$ evolves into $\mathcal{X}_u$'', then the above $\infty$-safety problem asks for a bound on the probability that 
	the LTL formula $\phi\, \mathcal{U} \psi$ holds.
\end{remark}

\section{Reducing $\infty$-Safety to $T$-Safety}\label{sec:reduction}

We dedicate this section to the reduction of the $\infty$-safety problem to its bounded counterpart. Observe that for any $0 \le T < \infty$,
\begin{equation*}
P(\exists t \ge 0\colon \tilde{X}_t \in \mathcal{X}_u)\le P(\exists t \in [0, T]\colon \tilde{X}_t \in \mathcal{X}_u) + P(\exists t\ge T\colon \tilde{X}_t \in \mathcal{X}_u).
\end{equation*}%
The key idea behind our approach is to first compute an exponentially decreasing bound on the \emph{tail failure probability} over $[T^*, \infty)$ (the computation of $T^* \ge 0$ will be shown later), and then for any constant $\epsilon > 0$, we can identify (out of the exponentially decreasing bound) a time instant $\tilde{T} \ge T^*$ such that $P(\exists t \ge \tilde{T}\colon \tilde{X}_t \in \mathcal{X}_u) \le \epsilon$. The overall bound on the failure probability over $[0, \infty)$ can consequently be obtained by solving the truncated $\tilde{T}$-safety problem.

\subsection{Exponentially Decreasing Bound on the Tail Failure Probability}

We first state a result that gives conditions when a linear map keeps vector inequality:
\begin{lemma}[{\cite[Chap.~4]{beckenbachinequalities}}]
For a matrix $M \in \mathbb{R}^{n \times n}$,
\begin{itemize}
\item $\forall \xx, \yy \in \mathbb{R}^n\colon \xx \le \yy \implies M \xx \le M \yy$ iff $M$ is \emph{non-negative}, i.e., $M_{ij} \ge 0$ for all $1 \le i, j \le n$.
\item The matrix $\exp{M t}$ is non-negative for all $t\ge 0$ iff $M$ is \emph{essentially non-negative}, i.e., $M_{i j} \ge 0$ for $i \neq j$.
\end{itemize}
\end{lemma}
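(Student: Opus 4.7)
The statement packages two essentially independent facts, so the plan is to handle them in turn.

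For the first equivalence, I would argue both directions directly from the definition of matrix-vector multiplication. The $(\Leftarrow)$ direction is immediate: if $M_{ij} \ge 0$ everywhere and $\yy - \xx \ge \zz$, then each entry of $M(\yy - \xx)$ is a non-negative combination of non-negative numbers, so $M\xx \le M\yy$. For $(\Rightarrow)$, I would probe with the standard basis: take $\xx = \zz$ and $\yy = e_j$. Then $\zz = M\xx \le M\yy = M e_j$ shows the $j$-th column of $M$ is entrywise non-negative, and since $j$ was arbitrary we get $M \ge 0$ entrywise. This part is essentially a one-paragraph computation, not a real obstacle.

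For the second equivalence the $(\Rightarrow)$ direction is again a routine perturbation argument. Expanding the power series $\exp{Mt} = I + Mt + \sum_{k \ge 2} \frac{t^k}{k!} M^k$, for $i \neq j$ the diagonal contribution vanishes, so
\begin{equation*}
(\exp{Mt})_{ij} \;=\; t\, M_{ij} + O(t^2).
\end{equation*}
Dividing by $t > 0$ and letting $t \downarrow 0$, the hypothesis $(\exp{Mt})_{ij} \ge 0$ forces $M_{ij} \ge 0$ whenever $i \neq j$, which is precisely essential non-negativity.

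The main (though still standard) step is the $(\Leftarrow)$ direction of the second equivalence, where I would use the classical shift trick. Given $M$ essentially non-negative, pick $c \ge 0$ large enough that every diagonal entry of $M + cI$ is also non-negative (e.g.\ $c = \max(0, -\min_i M_{ii})$); then $M + cI$ is entrywise non-negative. Since $M$ and $cI$ commute,
\begin{equation*}
\exp{Mt} \;=\; \exp{-ct}\,\exp{(M+cI)t} \;=\; \exp{-ct} \sum_{k=0}^{\infty} \frac{t^k}{k!}(M+cI)^k.
\end{equation*}
For $t \ge 0$, every term in the series is a non-negative matrix (powers of non-negative matrices remain non-negative, and the scalar coefficients are non-negative), so the sum is non-negative; the positive scalar $\exp{-ct}$ preserves this. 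Hence $\exp{Mt} \ge 0$ for all $t \ge 0$, completing the lemma. The only delicate point worth double-checking is the commutation $[M, cI] = 0$ that licenses the factorization of $\exp{(M+cI - cI)t}$; everything else is bookkeeping.
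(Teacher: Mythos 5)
Your proof is correct and complete: the basis-vector probe for the first equivalence, the first-order expansion of $\exp{Mt}$ for the forward direction of the second, and the shift trick $\exp{Mt} = \exp{-ct}\exp{(M+cI)t}$ (valid since $cI$ commutes with everything) for the converse are exactly the standard arguments. Note that the paper does not prove this lemma at all --- it is imported by citation from Beckenbach and Bellman --- so there is no in-paper proof to compare against; your write-up would serve as a self-contained justification.
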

%

The existence of an exponentially decreasing bound on the tail failure probability relies on a witness of a supermartingale of the exponential type:

\begin{theorem}\label{barrier_certificate_thm}
Suppose there exists an essentially non-negative matrix $\Lambda\in\mathbb{R}^{m\times m}$, together with an $m$-dimensional polynomial function (termed \emph{exponential stochastic barrier certificate})
$V(\xx) = \left(V_1(\xx), V_2(\xx),\ldots, V_m(\xx)\right)^\trans$, with $V_i\from \RR^n \to \RR$ for $1 \le i \le m$, satisfying\footnote{Condition~\eqref{inner_condition} is slightly stronger than the corresponding one used in~\cite{prajna2004stochastic,DBLP:journals/tac/PrajnaJP07}, yet will lead to an exponentially decreasing bound on the tail failure probability in return.}\textsuperscript{,}\footnote{Condition~\eqref{boundary_condition} is to ensure that when $\tilde{X}_t$ stops at the boundary of $\mathcal{X}$, we still have $\tilde{\mathcal{A}} V(\xx) \le -\Lambda V(\xx)$ for $\xx\in \partial \mathcal{X}$. If $\mathcal{X} = \mathbb{R}^n$, however, this condition can be omitted.}%
\begin{align}
&V(\xx)\geq \zz \quad \text{for }\xx\in \mathcal{X},\\
&\mathcal{A}V(\xx)\leq -\Lambda V(\xx) \quad \text{for }\xx\in \mathcal{X},\label{inner_condition}\\
&\Lambda V(\xx)\leq \zz \quad\text{for }\xx\in \partial \mathcal{X}.\label{boundary_condition}
\end{align}
Define a function
\begin{equation*}
F(t,\xx) \define \exp{\Lambda t}V(\xx),
\end{equation*}
then every component of $F(t,\tilde{X}_t)$ is a supermartingale.
\end{theorem}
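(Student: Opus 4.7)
The plan is to show that $\tilde{\mathcal{A}} F_i(t,\xx)\le 0$ componentwise on $[0,\infty)\times \mathcal{X}$ and then to invoke Dynkin's formula (Theorem~\ref{thm:dynkin}) together with the Markov property of the stopped process to deduce the supermartingale inequality $E[F_i(t,\tilde{X}_t)\mid \mathcal{F}_s]\le F_i(s,\tilde{X}_s)$ for $0\le s\le t$.

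First I would compute $\tilde{\mathcal{A}} F$ explicitly. Since $F(t,\xx)=\exp{\Lambda t} V(\xx)$, the time-derivative term yields $\partial_t F(t,\xx) = \Lambda \exp{\Lambda t} V(\xx) = \exp{\Lambda t}\Lambda V(\xx)$ (using the commutativity of $\Lambda$ and $\exp{\Lambda t}$), and the spatial operator acts componentwise on $V$ by linearity, so on the interior $\mathcal{X}^\interior$ I would obtain
\begin{equation*}
\mathcal{A} F(t,\xx) \;=\; \exp{\Lambda t}\bigl(\Lambda V(\xx)+\mathcal{A} V(\xx)\bigr).
\end{equation*}
Hypothesis~\eqref{inner_condition} gives $\Lambda V(\xx)+\mathcal{A} V(\xx)\le \zz$, and since $\Lambda$ is essentially non-negative the cited lemma guarantees $\exp{\Lambda t}\ge 0$ entrywise, so the linear map preserves the componentwise inequality and $\mathcal{A} F(t,\xx)\le \zz$ on $\mathcal{X}^\interior$. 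For $\xx\in\partial\mathcal{X}$, Eq.~\eqref{eq:equal-generator} reduces the stopped generator to $\tilde{\mathcal{A}} F(t,\xx) = \partial_t F(t,\xx) = \exp{\Lambda t}\Lambda V(\xx)$, and hypothesis~\eqref{boundary_condition} combined again with $\exp{\Lambda t}\ge 0$ yields $\tilde{\mathcal{A}} F(t,\xx)\le \zz$. Thus $\tilde{\mathcal{A}} F_i\le 0$ everywhere on $[0,\infty)\times\mathcal{X}$.

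Next I would translate this pointwise differential inequality into the supermartingale property. For any $0\le s\le t$, conditioning on $\mathcal{F}_s$ and using the strong Markov property of the stopped diffusion, Dynkin's formula applied to the stopped process $\tilde{X}_\cdot$ started at $\tilde{X}_s$ gives
\begin{equation*}
E\bigl[F_i(t,\tilde{X}_t)\mid \mathcal{F}_s\bigr] \;=\; F_i(s,\tilde{X}_s) + E\!\left[\int_s^{t}\tilde{\mathcal{A}} F_i(u,\tilde{X}_u)\dif u\;\Big|\;\mathcal{F}_s\right] \;\le\; F_i(s,\tilde{X}_s),
\end{equation*}
while non-negativity of $F_i(t,\tilde{X}_t)$ (which follows from $V\ge\zz$ on $\mathcal{X}$ and $\exp{\Lambda t}\ge 0$) ensures integrability $E[F_i(t,\tilde{X}_t)]<\infty$ whenever $E[V(X_0)]<\infty$.

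The main obstacle I anticipate is that Theorem~\ref{thm:dynkin} is stated for $C^{1,2}$ functions with \emph{compact support}, whereas our $F$ is polynomial in $\xx$. I would handle this via a standard localization argument: pick an increasing sequence of compact sets $K_N\uparrow\RR^n$ and associated stopping times $\tau_N\define \inf\{t\mid \tilde{X}_t\notin K_N\}$, apply Dynkin's formula to a smooth cutoff $F^{(N)}$ that agrees with $F$ on $K_N$, and pass to the limit $N\to\infty$ using monotone/dominated convergence (exploiting that the process is stopped on $\partial \mathcal{X}$ and that $F_i\ge 0$). A small additional care is needed so that the boundary identity~\eqref{eq:equal-generator} is applied correctly for the cutoff versions, but aside from this technical localization the argument is driven entirely by the algebraic inequality $\mathcal{A} F\le\zz$ derived above.
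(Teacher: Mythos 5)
Your proposal is correct and follows essentially the same route as the paper's proof: compute $\tilde{\mathcal{A}}F = \exp{\Lambda t}(\Lambda V + \mathcal{A}V) \le \zz$ on the interior and $\exp{\Lambda t}\Lambda V \le \zz$ on the boundary using the essential non-negativity of $\Lambda$, then apply Dynkin's formula to a localized (stopped) version of the process and pass to the limit. The only cosmetic difference is in the limit passage: the paper localizes via the exit time of $F$ from a $\delta$-ball and uses Fatou's lemma (the natural tool here, since neither monotonicity nor an obvious dominating function is available, but $F \ge \zz$ is), which is exactly the non-negativity you already flag as the key ingredient.
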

\begin{proof}
For cases with a bounded domain $\mathcal{X}$, one can trivially extend the domain of $F(t, \xx)$ s.t. $F$ is compactly-supported, and thus Dynkin's formula in Theorem~\ref{thm:dynkin} applies immediately. For cases where $\mathcal{X}$ is unbounded, we introduce a stopping time
\begin{equation*}
\tau_\delta \define \inf\left\{t \bigm| F\left(t,\tilde{X}_t\right) \geq \ball(\zz, \delta)\right\},
\end{equation*}
and denote by $X^{(\delta)}_t \define (t \wedge \tau_\delta,\tilde{X}_{t \wedge \tau_\delta})$ the corresponding stopped process involving the timeline, and by $\mathcal{A}^{(\delta)}$ the corresponding infinitesimal generator. Then $X^{(\delta)}_t$ evolves within the $\delta$-closed ball $\ball(\zz, \delta)$ and hence boils down to the case with a bounded domain. Moreover, by Eq.~\eqref{eq:equal-generator}, we have
\begin{align*}
\mathcal{A}^{(\delta)}F\left(X_t^{(\delta)}\right) &= \mathcal{A}^{(\delta)}F\left(t \wedge \tau_\delta, \tilde{X}_{t \wedge \tau_\delta}\right)\\
& =
\begin{cases}
0\quad \text{if } \tau_\delta(\omega)\leq t,\\
\frac{\partial F}{\partial t}(t, X_t) + \exp{\Lambda t} \mathcal{A} V(X_t) \leq 0 \quad \text{if } \tau_\delta(\omega) > t \wedge \tau_\mathcal{X}(\omega) > t,\\
\frac{\partial F}{\partial t}(t, X_t) \leq 0 \quad \text{if } \tau_\delta(\omega) > t \wedge \tau_\mathcal{X}(\omega) \leq t,
\end{cases}
\end{align*}
where $\tau_\mathcal{X}$ represents the time instant when escaping from the state space $\mathcal{X}$. Note that the second and the third case hold due to the non-negativity of $\exp{\Lambda t}$ (as $\Lambda$ is essentially non-negative), which implies that $\exp{\Lambda t}$ preserves vector inequalities~\eqref{inner_condition} and~\eqref{boundary_condition}. Hence by Dynkin's formula (in a component-wise manner), for fixed $t, h \in [0, \infty)$, we have
\begin{align*}
E\left[F\left(\left(t+h\right)\wedge \tau_\delta, \tilde{X}_{(t+h)\wedge \tau_\delta}\right) \bigm| \mathcal{F}_h\right] &= E^{X_h^{(\delta)}}\left[F\left(X_{t+h}^{(\delta)}\right)\right]\\
&= F\left(X_h^{(\delta)}\right) + E^{X_h^{(\delta)}}\left[\int_0^{t}\mathcal{A}^{(\delta)}F\left(X_{s}^{(\delta)}\right) \dif s\right]\\
&\leq F\left(X_h^{(\delta)}\right)\\
&= F\left(h \wedge \tau_\delta, \tilde{X}_{h \wedge \tau_\delta}\right).
\end{align*}
Since $F(t, \xx) > \zz$, by Fatou's lemma, we have
\begin{align*}
E\left[F\left(t+h,\tilde{X}_{t+h}\right) \bigm| \mathcal{F}_h\right] &= E\left[\liminf_{\delta \to \infty} F\left((t+h) \wedge \tau_\delta, \tilde{X}_{(t+h) \wedge \tau_\delta}\right) \bigm| \mathcal{F}_h\right]\\
&\leq \liminf_{\delta \to \infty} E\left[F\left((t+h) \wedge \tau_\delta, \tilde{X}_{(t+h)\wedge\tau_\delta}\right) \bigm| \mathcal{F}_h\right]\\
&\leq \liminf_{\delta \to \infty} F\left(h \wedge \tau_\delta, \tilde{X}_{h \wedge \tau_\delta}\right)\\
&\leq F\left(h, \tilde{X}_h\right).
\end{align*}
It follows consequently that every component of $F(t,\tilde{X}_t)$ is a supermartingale.
\qed
\end{proof}

We will show in Sect.~\ref{sec:synthesis} that the synthesis of the exponential stochastic barrier certificate $V(\xx)$ (and thereby the function $F(t,\xx)$) boils down to solving a pertinent SDP optimization problem.

In order to further establish the relation between the exponential supermartingale $F(t,\tilde{X}_t)$ (and thereby $V(\xx)$) and the bound on tail failure probability, we recall Doob's maximal inequality for supermartingales, which gives a bound on the probability that a non-negative supermartingale exceeds some given value over a given time interval:
\begin{lemma}[Doob's supermartingale inequality~\cite{karatzas2014brownian}]\label{maximal_inequlity}
Let $\{X_t\}_{t > 0}$ be a right continuous non-negative supermartingale adapted to a filtration $\{\mathcal{F}_t \mid t > 0\}$. Then for any $\lambda > 0$,
\begin{equation*}
    \lambda P\left(\sup_{t \geq 0}\, X_t \geq \lambda\right) \leq E[X_0].
\end{equation*}
\end{lemma}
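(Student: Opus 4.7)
The plan is to reduce the continuous-time statement to a finite-horizon stopping-time argument and then pass to the limit. First I would fix $T < \infty$ and introduce the hitting time $\tau_\lambda \define \inf\{t \ge 0 \mid X_t \ge \lambda\}$, which is a stopping time w.r.t.\ $\{\mathcal{F}_t\}$ precisely because $\{X_t\}$ is right-continuous and adapted. Since $\tau_\lambda \wedge T$ is then a bounded stopping time, the optional stopping theorem for right-continuous non-negative supermartingales yields $E[X_{\tau_\lambda \wedge T}] \le E[X_0]$.

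Second, I would split the expectation according to whether the process has crossed level $\lambda$ by time $T$:
\begin{equation*}
E\!\left[X_{\tau_\lambda \wedge T}\right] \;=\; E\!\left[X_{\tau_\lambda}\, \mathbf{1}_{\{\tau_\lambda \le T\}}\right] + E\!\left[X_T\, \mathbf{1}_{\{\tau_\lambda > T\}}\right].
\end{equation*}
Right-continuity of the sample paths guarantees $X_{\tau_\lambda} \ge \lambda$ on the event $\{\tau_\lambda \le T\}$, so the first summand is at least $\lambda P(\tau_\lambda \le T)$; the second summand is non-negative by hypothesis. Combining these bounds yields $\lambda P(\tau_\lambda \le T) \le E[X_0]$.

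Third, to recover the supremum statement I would let $T \to \infty$: the events $\{\tau_\lambda \le T\}$ monotonically increase to $\{\tau_\lambda < \infty\}$, so continuity of measure gives $\lambda P(\tau_\lambda < \infty) \le E[X_0]$. To bridge the possible mismatch between $\{\tau_\lambda < \infty\}$ and $\{\sup_{t \ge 0} X_t \ge \lambda\}$ at the boundary value $\lambda$, I would apply the same inequality at each level $\lambda - \epsilon$ with $\epsilon \in (0, \lambda)$, use the inclusion $\{\sup_{t \ge 0} X_t \ge \lambda\} \subseteq \{\tau_{\lambda - \epsilon} < \infty\}$ to obtain $(\lambda - \epsilon) P(\sup_{t \ge 0} X_t \ge \lambda) \le E[X_0]$, and finally let $\epsilon \downarrow 0$.

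The main obstacle is invoking optional stopping in continuous time with the required generality. The cleanest route is to first establish the analogous inequality for discrete-time non-negative supermartingales---where it follows immediately from the fact that $X_{n \wedge \tau}$ is itself a supermartingale in $n$ together with Markov's inequality---and then lift the bound to continuous time along a countable dense subset $D \subset [0, T]$, exploiting right-continuity of sample paths to equate $\sup_{t \in [0,T]} X_t$ with $\sup_{t \in D} X_t$ almost surely.
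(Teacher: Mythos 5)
The paper does not prove this lemma---it is quoted verbatim as a classical result from Karatzas--Shreve---so there is no in-paper argument to compare against. Your proof is the standard and correct one: optional sampling at the bounded stopping time $\tau_\lambda \wedge T$, the split of $E[X_{\tau_\lambda \wedge T}]$ over $\{\tau_\lambda \le T\}$ and its complement, monotone convergence in $T$, and the $\lambda - \epsilon$ device to reconcile $\{\tau_\lambda < \infty\}$ with $\{\sup_{t\ge 0} X_t \ge \lambda\}$. The only point stated a bit too casually is that $\tau_\lambda = \inf\{t \mid X_t \ge \lambda\}$ is a stopping time ``precisely because'' of right-continuity and adaptedness: hitting times of closed sets require either the usual conditions on the filtration or the d\'ebut theorem, but your own $\lambda-\epsilon$ approximation (equivalently, working with the entrance time of the open set $(\lambda-\epsilon,\infty)$, which is a stopping time for the right-continuous filtration) repairs this, and your closing remark about reducing to discrete time along a countable dense set is exactly how the continuous-time optional sampling step is justified in the reference.
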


The following theorem claims an intermediate fact that will later reveal the exponentially decreasing bound on the tail failure probability.

\begin{theorem}\label{general_bound}
Suppose the conditions in Theorem~\ref{barrier_certificate_thm} are satisfied. Then for any $T \ge 0$ and any positive vector $\gamma \in \mathbb{R}^m$,
\begin{equation}\label{inequ:general_bound}
P\left(\sup_{t \geq T} V\left(\tilde{X}_t\right) \geq \sup_{t \geq T} \left(\exp{-\Lambda t}\gamma\right)\right) \leq E\left[V_i(X_0)\right]/\gamma_i
\end{equation}%
holds for all $i \in \{1, \ldots, m\}$.
\end{theorem}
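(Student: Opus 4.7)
The plan is to apply Doob's maximal inequality (Lemma~\ref{maximal_inequlity}) to each scalar component of the exponential supermartingale $F(t,\tilde{X}_t) = \exp{\Lambda t}V(\tilde{X}_t)$ that Theorem~\ref{barrier_certificate_thm} already furnishes, and then to translate the resulting scalar tail bound back into a statement about the vector-valued event in~\eqref{inequ:general_bound}. The essential non-negativity of $\Lambda$, which makes $\exp{\Lambda t}$ entrywise non-negative and hence an order-preserving linear map on the componentwise cone, is precisely what allows this translation to go through.

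Concretely, for each fixed $i \in \{1,\ldots,m\}$ I would first note that $\{F_i(t,\tilde{X}_t)\}_{t \ge 0}$ is a non-negative, right-continuous supermartingale with $F_i(0,X_0) = V_i(X_0)$: non-negativity follows from $V(\xx) \ge \zz$ on $\mathcal{X}$ combined with the entrywise non-negativity of $\exp{\Lambda t}$, while the supermartingale property is exactly the conclusion of Theorem~\ref{barrier_certificate_thm}. Invoking Lemma~\ref{maximal_inequlity} with threshold $\lambda = \gamma_i > 0$ then yields
$$ P\!\left(\sup_{t \ge 0} F_i(t,\tilde{X}_t) \ge \gamma_i\right) \le \frac{E[V_i(X_0)]}{\gamma_i}, $$
and restricting the supremum to $t \ge T$ (which only weakens the left-hand side) preserves the same bound. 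It then remains to prove the containment
$$ \left\{\sup_{t \ge T} V(\tilde{X}_t) \ge \sup_{t \ge T}(\exp{-\Lambda t}\gamma)\right\} \;\subseteq\; \left\{\sup_{t \ge T} F_i(t,\tilde{X}_t) \ge \gamma_i\right\}, $$
which I would argue as follows: on any sample path in the left event there is some witness time $t^{*} \ge T$ at which $V(\tilde{X}_{t^{*}}) \ge \sup_{s \ge T}(\exp{-\Lambda s}\gamma) \ge \exp{-\Lambda t^{*}}\gamma$ componentwise; left-multiplying by the non-negative matrix $\exp{\Lambda t^{*}}$ preserves this ordering and hence gives $\exp{\Lambda t^{*}}V(\tilde{X}_{t^{*}}) \ge \gamma$, whose $i$-th component is precisely $F_i(t^{*},\tilde{X}_{t^{*}}) \ge \gamma_i$, so the right event occurs.

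The main obstacle, in my view, lies in justifying the extraction of a single witness time $t^{*}$ at which the pointwise vector inequality $V(\tilde{X}_{t^{*}}) \ge \exp{-\Lambda t^{*}}\gamma$ holds, starting from the sup-based vector event on the left-hand side of~\eqref{inequ:general_bound}; this will rest on the sample-path continuity of $\tilde{X}_t$ together with the continuity of the polynomial map $V$ and of $t \mapsto \exp{-\Lambda t}\gamma$. Once this common witness is secured, the essential non-negativity of $\Lambda$ transports the componentwise inequality through $\exp{\Lambda t^{*}}$, and Doob's maximal inequality closes the loop to deliver the desired bound for every $i$.
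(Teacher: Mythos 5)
Your proof is correct and follows essentially the same route as the paper's: you pass from the vector sup-event to a witness time, push the componentwise inequality through the entrywise non-negative matrix $\exp{\Lambda t^*}$ to land in the event $\{\sup_{t\ge T}F_i(t,\tilde{X}_t)\ge\gamma_i\}$, and close with Doob's maximal inequality applied to the component supermartingale $F_i$ furnished by Theorem~\ref{barrier_certificate_thm}. The only (harmless) difference is that you invoke Doob from time $0$ and enlarge the supremum to $t\ge 0$, whereas the paper restarts the supermartingale at $T$ and then bounds $E[F_i(T,\tilde{X}_T)]$ by $E[V_i(X_0)]$ via the supermartingale property---both yield the identical bound, and the witness-time subtlety you flag is glossed over in exactly the same way in the paper's own first inequality.
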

\begin{proof}
Observe the following chain of (in-)equalities:
\begin{align*}
P\left(\sup_{t \geq T} V\left(\tilde{X}_t\right) \right.&\geq \left.\sup_{t \geq T}\left(\exp{- \Lambda t} \gamma\right)\right) \leq P\left(\exists t\geq T\colon V\left(\tilde{X}_t\right) \geq \exp{- \Lambda t} \gamma\right)\\
&\leq P\left(\exists t \geq T\colon \exp{\Lambda t} V\left(\tilde{X}_t\right) \geq \gamma\right)\TAG{non-negative $\exp{\Lambda t}$}\\
&= P\left(\sup_{t \geq T} F\left(t, \tilde{X}_t\right) \geq \gamma\right)\\
&\leq P\left(\sup_{t \geq T} F_i\left(t, \tilde{X}_t\right) \geq \gamma_i\right)\\
&\leq E\left[F_i\left(T, \tilde{X}_T\right)\right]/\gamma_i\TAG{Lemma~\ref{maximal_inequlity}}\\
&\leq E\left[V_i\left(X_0\right)\right]/\gamma_i\TAG{Theorem~\ref{barrier_certificate_thm}}
\end{align*}%
which holds for any $i\in\{1,2,\cdots,m\}$. This completes the proof.
\qed
\end{proof}

Now, we are ready to give the exponentially decreasing bound on the tail failure probability derived from Theorem~\ref{general_bound}. We start by considering the simple case where the barrier certificate $V(\xx)$ is a scalar function, i.e., with $m = 1$.

\begin{proposition}\label{dim1}
Suppose there exists a positive constant $\Lambda \in \RR$ and a scalar function $V\from \RR^n \to \RR$ satisfying Theorem~\ref{barrier_certificate_thm}. Then,
\begin{equation}\label{eq:dim1-bound-gamma}
P\left(\sup_{t \geq T} V\left(\tilde{X}_t\right) \geq \gamma\right) \leq \frac{E\left[V(X_0)\right]}{\exp{\Lambda T} \gamma}
\end{equation}
holds for any $\gamma > 0$ and $T \ge 0$. Moreover, if there exists $l > 0$ such that
\[V(\xx)\geq l \quad \text{for all } \xx\in\mathcal{X}_u,\]
then
\begin{equation}\label{eq:dim1-bound-l}
P\left(\exists t \geq T\colon \tilde{X}_t \in \mathcal{X}_u\right) \leq \frac{E[V(X_0)]}{\exp{\Lambda T} l}
\end{equation}%
holds for any $T \geq 0$.
\end{proposition}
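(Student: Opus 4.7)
The plan is to derive Proposition~\ref{dim1} as a direct corollary of Theorem~\ref{general_bound} by specializing to the scalar case $m=1$ and then using a simple set-inclusion argument to translate the bound on $V(\tilde{X}_t)$ into a bound on entry into $\mathcal{X}_u$.

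For the first inequality~\eqref{eq:dim1-bound-gamma}, I would start from Theorem~\ref{general_bound} with $m=1$. The key observation is that for scalar $\Lambda > 0$, the map $t \mapsto \exp{-\Lambda t}$ is strictly decreasing, so for any $T \geq 0$ and any positive scalar $\gamma'$,
\[
\sup_{t \geq T}\bigl(\exp{-\Lambda t}\, \gamma'\bigr) \;=\; \exp{-\Lambda T}\,\gamma'.
\]
Given an arbitrary $\gamma > 0$, I would pick $\gamma' \define \exp{\Lambda T}\gamma$, which is still positive. Substituting this $\gamma'$ into inequality~\eqref{inequ:general_bound} yields exactly
\[
P\!\left(\sup_{t\geq T} V(\tilde{X}_t) \geq \gamma\right) \;\leq\; \frac{E[V(X_0)]}{\gamma'} \;=\; \frac{E[V(X_0)]}{\exp{\Lambda T}\gamma},
\]
which is~\eqref{eq:dim1-bound-gamma}. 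A small sanity check is needed to confirm that the hypotheses of Theorem~\ref{barrier_certificate_thm} are preserved under the scalar specialization: a positive constant $\Lambda \in \mathbb{R}$ is trivially essentially non-negative as a $1\times 1$ matrix, and the vector inequalities~\eqref{inner_condition}--\eqref{boundary_condition} reduce to scalar ones.

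For the second inequality~\eqref{eq:dim1-bound-l}, I would use a simple monotonicity-of-measure argument. The hypothesis $V(\xx) \geq l$ on $\mathcal{X}_u$ implies the event inclusion
\[
\bigl\{\omega : \exists\, t \geq T,\; \tilde{X}_t(\omega) \in \mathcal{X}_u\bigr\} \;\subseteq\; \bigl\{\omega : \sup_{t \geq T} V(\tilde{X}_t(\omega)) \geq l\bigr\},
\]
since any sample path entering $\mathcal{X}_u$ at some $t \geq T$ forces $V(\tilde{X}_t) \geq l$ at that instant, hence makes the supremum at least $l$. Monotonicity of $P$ combined with~\eqref{eq:dim1-bound-gamma} applied to $\gamma \define l$ then yields the claimed bound.

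There is no real obstacle here, as the result is essentially the one-dimensional shadow of Theorem~\ref{general_bound}; the only minor care point is the proper choice of $\gamma'$ to absorb the factor $\exp{\Lambda T}$, and ensuring measurability of the suprema (which follows from right-continuity of sample paths of the stopped It\^o diffusion, so that the sup over $t \geq T$ can be taken over a countable dense subset).
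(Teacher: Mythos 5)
Your proposal is correct and follows essentially the same route as the paper's own proof: both derive \eqref{eq:dim1-bound-gamma} by applying Theorem~\ref{general_bound} with $\gamma$ replaced by $\exp{\Lambda T}\gamma$ and using that $\sup_{t\ge T}\exp{-\Lambda t}$ is attained at $t=T$, and both obtain \eqref{eq:dim1-bound-l} from the event inclusion induced by $V\ge l$ on $\mathcal{X}_u$. Your added remarks on measurability of the supremum and on the scalar specialization of the hypotheses are harmless refinements of the same argument.
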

\begin{proof}
Eq.~\eqref{eq:dim1-bound-gamma} holds since
\begin{align*}
\hspace*{-.28cm}
P\left(\sup_{t \geq T} V\left(\tilde{X}_t\right) \geq \gamma\right) &= P\left(\sup_{t \geq T} V\left(\tilde{X}_t\right) \geq \exp{- \Lambda T}\left(\exp{\Lambda T} \gamma\right)\right)\\
&\leq P\left(\sup_{t \geq T} V\left(\tilde{X}_t\right) \geq \sup_{t \geq T} \left(\exp{-\Lambda t} \left(\exp{\Lambda T}\gamma\right)\right)\right)\!\!\!\TAG{monotonicity on $t$}\\
&\leq \frac{E[V(X_0)]}{\exp{\Lambda T} \gamma}.\TAG{Theorem~\ref{general_bound}}
\end{align*}%
For Eq.~\eqref{eq:dim1-bound-l}, it is immediately obvious that
\[P\left(\exists t \geq T\colon \tilde{X}_t \in \mathcal{X}_u\right) \leq P\left(\sup_{t \geq T} V\left(\tilde{X}_t\right) \geq l\right) \leq \frac{E[V(X_0)]}{\exp{\Lambda T} l}.\]
This completes the proof.\qed
\end{proof}

Now we lift the results to the slightly more involved case with $m > 1$.

\begin{proposition}\label{high_dim}
Suppose there exists an essentially non-negative matrix $\Lambda \in \mathbb{R}^{m\times m}$ and an $m$-dimensional polynomial function
$V\from \RR^n \to \RR^m$ satisfying Theorem~\ref{barrier_certificate_thm}.
If all of the eigenvalues of $\Lambda$ have positive real parts, i.e.,
\begin{equation*}
\min_{1 \le i \le m}\left\{\Re(\lambda_i) \mid \lambda_i~\text{is an eigenvalue of}~\Lambda\right\} > 0,
\end{equation*}%
then for any positive vector $\gamma \in \mathbb{R}^m$, there exists $T^* = T^*(\gamma, M, \Lambda) \in \mathbb{R}$ such that for any $T \geq T^*$,
\begin{equation}\label{high_dim_bound_gamma}
P\left(\sup_{t \geq T} V\left(\tilde{X}_t\right) \geq \gamma\right) \leq \frac{E[V_i(X_0)]}{\left(\exp{M T} \gamma\right)_i}
\end{equation}%
holds for all $i \in \{1, \ldots, m\}$. Here, $M$ is an essentially non-negative matrix s.t. all of the eigenvalues of $\Lambda - M$ have positive real parts\footnote{Such matrix $M$ always exists, for instance, $M \define \Lambda/2$.}. Moreover, if there exists a positive vector $l \in \mathbb{R}^m$ such that
\[V(\xx) \geq l \quad \text{for all } \xx \in \mathcal{X}_u,\]
then for any $T \geq T^*$,
\begin{equation}\label{high_dim_bound}
P\left(\exists t \geq T\colon \tilde{X}_t \in \mathcal{X}_u\right) \leq \frac{E[V_i(X_0)]}{\left(\exp{M T} l\right)_i}
\end{equation}%
holds for all $i \in \{1, \ldots, m\}$.
\end{proposition}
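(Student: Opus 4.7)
The plan is to invoke Theorem~\ref{general_bound} with its free positive vector instantiated as $\exp{MT}\gamma$, and then convert the resulting bound into the form of Eq.~\eqref{high_dim_bound_gamma} via a vector-monotonicity argument that will force the threshold $T \geq T^*$. First I would verify that $\exp{MT}\gamma$ is indeed a strictly positive vector for every $T \geq 0$: since $M$ is essentially non-negative, $\exp{MT}$ has non-negative entries with strictly positive diagonal, so its product with the positive vector $\gamma$ is positive componentwise. Plugging this choice into Theorem~\ref{general_bound} yields, for each $i \in \{1, \ldots, m\}$,
\begin{equation*}
P\left(\sup_{t \geq T} V(\tilde X_t) \geq \sup_{t \geq T}\left(\exp{-\Lambda t}\exp{MT}\gamma\right)\right) \leq \frac{E[V_i(X_0)]}{(\exp{MT}\gamma)_i},
\end{equation*}
which already has the right-hand side required by~\eqref{high_dim_bound_gamma}.

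Next I would reduce the proof of~\eqref{high_dim_bound_gamma} to the componentwise bound
\begin{equation*}
\sup_{t \geq T}\left(\exp{-\Lambda t}\exp{MT}\gamma\right) \leq \gamma \qquad \text{for all } T \geq T^*,
\end{equation*}
since the standard event inclusion $\{V \geq b\} \subseteq \{V \geq a\}$ whenever $a \leq b$ componentwise then gives $\{\sup_t V \geq \gamma\} \subseteq \{\sup_t V \geq \sup_t(\exp{-\Lambda t}\exp{MT}\gamma)\}$ and hence transfers the probability bound to $P(\sup_t V \geq \gamma)$. Substituting $s := t - T \geq 0$, the task reduces to showing that $\exp{-\Lambda s}\exp{-\Lambda T}\exp{MT}\gamma \leq \gamma$ componentwise for every $s \geq 0$, provided $T$ is large enough.

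I would bound the two factors separately. The prefactor $\exp{-\Lambda s}$ for $s \geq 0$ is norm-bounded (indeed, exponentially decaying) because every eigenvalue of $\Lambda$ has positive real part. The ``inner'' vector $\exp{-\Lambda T}\exp{MT}\gamma$ should tend to $\zz$ as $T \to \infty$ under the hypothesis that all eigenvalues of $\Lambda - M$ have positive real parts: in the commuting case---notably the canonical choice $M := \Lambda/2$ flagged in the footnote---this is immediate via $\exp{-\Lambda T}\exp{MT} = \exp{(M - \Lambda)T}$, which decays exponentially in $T$. Combining a uniform-in-$s$ bound on $\|\exp{-\Lambda s}\|$ with the exponential decay of the inner vector produces a threshold $T^* = T^*(\gamma, M, \Lambda)$ beyond which the desired componentwise inequality holds. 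The main obstacle I expect is rigorously treating the non-commuting case, where the clean exponential identity fails and one must instead rely on spectral-abscissa estimates; exploiting that $\Lambda$ and $M$ are both essentially non-negative (Metzler), Perron--Frobenius-style asymptotics for $\|\exp{\Lambda T}\|$ and $\|\exp{M T}\|$ should suffice, though care is needed because the hypothesis on the spectrum of $\Lambda - M$ does not directly compare the individual spectral abscissae of $\Lambda$ and $M$.

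Finally, Eq.~\eqref{high_dim_bound} would follow immediately from~\eqref{high_dim_bound_gamma} by setting $\gamma := l$ and invoking the event inclusion
$\{\exists t \geq T \colon \tilde X_t \in \mathcal{X}_u\} \subseteq \{\exists t \geq T \colon V(\tilde X_t) \geq l\} \subseteq \{\sup_{t \geq T} V(\tilde X_t) \geq l\}$,
which is valid because $V(\xx) \geq l$ on $\mathcal{X}_u$ by hypothesis, exactly mirroring the passage from~\eqref{eq:dim1-bound-gamma} to~\eqref{eq:dim1-bound-l} in the scalar case.
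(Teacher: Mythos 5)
Your proposal is correct and follows essentially the same route as the paper's proof: instantiate Theorem~\ref{general_bound} at $\exp{M T}\gamma$, reduce Eq.~\eqref{high_dim_bound_gamma} to the componentwise inequality $\sup_{t\ge T}\left(\exp{-\Lambda t}\exp{M T}\gamma\right)\le\gamma$ by splitting off a uniformly bounded factor $\exp{-\Lambda (t-T)}$ from an inner vector that decays as $T\to\infty$, and finish with the event inclusion furnished by $V(\xx)\ge l$ on $\mathcal{X}_u$. The non-commutativity obstacle you flag is genuine but is not resolved by the paper either: its proof silently rewrites $\exp{-\Lambda t}\exp{M T}$ as $\exp{-\Lambda(t-T)}\exp{-(\Lambda-M)T}$, an identity that holds only when $\Lambda$ and $M$ commute (as for the canonical choice $M=\Lambda/2$), so on this point your write-up is, if anything, the more careful of the two.
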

\begin{proof}
By substituting $\gamma$ in Eq.~\eqref{inequ:general_bound} with $\exp{M T} \gamma$, we have that for all $T\geq 0$,
\begin{equation}\label{high_dim_general_bound}
\begin{aligned}
\frac{E[V_i(X_0)]}{\left(\exp{M T} \gamma\right)_i} &\geq P\left(\sup_{t \geq T} V\left(\tilde{X}_t\right) \geq \sup_{t \geq T} \left(\exp{- \Lambda t} \exp{M T} \gamma\right)\right)\\
&=P\left(\sup_{t \geq T} V\left(\tilde{X}_t\right) \geq \sup_{t \geq T} \left(\exp{- \Lambda (t-T)} \exp{- (\Lambda - M) T}\gamma\right)\right)
\end{aligned}
\end{equation}%
holds for any $\gamma \in \mathbb{R}^m$ with $\gamma > \zz$. Observe that
\begin{align*}
\infnorm{\sup_{t \geq T} \left(\exp{- \Lambda (t-T)} \exp{- (\Lambda - M) T} \gamma\right)}
&= \infnorm{\sup_{t \geq 0} \left(\exp{- \Lambda t} \exp{- (\Lambda - M) T} \gamma\right)}\\
&\leq \infnorm{\sup_{t \geq 0}\left(\exp{- \Lambda t}\right)} \infnorm{\exp{- (\Lambda - M) T} \gamma},
\end{align*}%
where $\lvert\cdot\rvert_\infty$ denotes the infinity norm. Moreover, since all of the eigenvalues of $\Lambda - M$ have positive real parts, then by the Lyapunov stability established in the theory of ODEs, we have
\[\lim_{T\to \infty}\exp{-(\Lambda - M)T} \gamma = \zz.\]
There hence exists $T^*$ s.t. for all $T \geq T^*$, 
\begin{equation}\label{high_dim_connection}
\sup_{t \geq T} \left(\exp{- \Lambda (t-T)} \exp{- (\Lambda - M) T} \gamma\right) \leq \gamma.
\end{equation}%
By Combining Eq.~\eqref{high_dim_connection} and Eq.~\eqref{high_dim_general_bound}, we obtain Eq.~\eqref{high_dim_bound_gamma}. For Eq.~\eqref{high_dim_bound}, it follows immediately that 
\[P\left(\exists t \geq T\colon \tilde{X}_t \in \mathcal{X}_u\right) \leq P\left(\sup_{t \geq T} V\left(\tilde{X}_t\right) \geq l\right) \leq \frac{E[V_i(X_0)]}{\left(\exp{M T} l\right)_i}.\]
This completes the proof.
\qed	
\end{proof}

\begin{remark}
Proposition~\ref{high_dim} argues the existence of $T^*$ that suffices to ``split off'' the tail failure probability. From a computational perspective, this is algorithmically tractable as the matrix exponential involved in Eq.~\eqref{high_dim_connection} is symbolically computable (cf., e.g.,~\cite{moler2003nineteen}).
\end{remark}

The following theorem states the main result of this section, that is, for any given constant $\epsilon$, there exists $\tilde{T} \ge 0$ such that the truncated $\tilde{T}$-tail failure probability is bounded by $\epsilon$:

\begin{theorem}\label{vector_barrier_conclusion}
Suppose the conditions in Proposition~\ref{dim1} and~\ref{high_dim} are satisfied. If there exists $\alpha > 0$, s.t. $\forall \xx \in \mathcal{X}_0\colon V_i(\xx) \leq \alpha$ holds for some $i \in \{1, \ldots, m\}$.
Then for any $\epsilon > 0$, there exists $\tilde{T} \geq 0$ such that 
\begin{equation*}
P\left(\exists t \geq \tilde{T}\colon \tilde{X}_t \in \mathcal{X}_u\right)\leq \epsilon.
\end{equation*}	
\end{theorem}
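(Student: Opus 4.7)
The plan is to bootstrap the finite-time bounds of Propositions~\ref{dim1} and~\ref{high_dim} into a tail-convergence result by picking $\tilde{T}$ large enough that the right-hand side of those bounds is forced below the prescribed threshold $\epsilon$. The argument naturally branches on whether the exponential stochastic barrier certificate is scalar ($m=1$) or vector-valued ($m>1$). As a common preliminary step, I would fix the distinguished index $i$ for which $V_i(\xx)\leq\alpha$ throughout $\mathcal{X}_0$; since the support of $X_0$ lies entirely in $\mathcal{X}_0$, monotonicity of expectation immediately gives $E[V_i(X_0)]\leq\alpha$, uniformly controlling the numerator that appears in the bounds of both propositions.

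For $m=1$, Proposition~\ref{dim1} directly delivers $P(\exists t\geq T\colon\tilde{X}_t\in\mathcal{X}_u)\leq\alpha/(\exp{\Lambda T}\,l)$ for every $T\geq 0$, so since $\Lambda>0$, picking $\tilde{T}$ logarithmically in $1/\epsilon$ disposes of the scalar case. The only real difficulty lies in the vector case $m>1$, where the bound of Proposition~\ref{high_dim} becomes $\alpha/(\exp{MT}\,l)_i$, valid for $T\geq T^*$. The main obstacle is to guarantee that this specific $i$-th component grows unboundedly in $T$, which is not automatic for an arbitrary essentially non-negative $M$: entries of $\exp{MT}$ may fail to grow if $M$ itself carries eigenvalues with non-positive real parts, and row-wise cancellation in the product $\exp{MT}\,l$ cannot be ruled out in general.

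I would circumvent this obstacle by exploiting the freedom left in choosing $M$: take $M\define c\,I$ for any scalar $0<c<\min_j\Re(\lambda_j(\Lambda))$. Then $M$ is trivially essentially non-negative, $\Lambda-M$ inherits eigenvalues with positive real parts from $\Lambda$ so the hypotheses of Proposition~\ref{high_dim} remain in force, and $(\exp{MT}\,l)_i=\exp{cT}\cdot l_i$ diverges to infinity because $l_i>0$. Setting $\tilde{T}\geq\max\{T^*,\,c^{-1}\log(\alpha/(\epsilon\, l_i))\}$ then produces the desired bound on the tail failure probability and closes the argument.
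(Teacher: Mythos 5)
Your proposal is correct and follows essentially the same route as the paper: bound the numerator $E[V_i(X_0)]$ by $\alpha$ via the support assumption on $X_0$, and choose $\tilde{T}$ large enough that the denominator in Eq.~\eqref{eq:dim1-bound-l} (resp.\ Eq.~\eqref{high_dim_bound}) exceeds $\alpha/(\epsilon\, l_i)$. The one point where you go beyond the paper is welcome: the paper simply asserts that $(\exp{M T} l)_i \to +\infty$ for an essentially non-negative $M$ whose eigenvalues all have positive real parts, whereas your explicit choice $M = c I$ with $0 < c < \min_j \Re(\lambda_j(\Lambda))$ makes that divergence (and the compatibility with the hypotheses of Proposition~\ref{high_dim}) immediate, which is a legitimate use of the freedom in selecting $M$ since the theorem's conclusion does not depend on it.
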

\begin{proof}
Observe that for Eq.~\eqref{high_dim_bound} in Proposition~\ref{high_dim}, the assumption $\forall \xx \in \mathcal{X}_0\colon V_i(\xx) \leq \alpha$ guarantees an upper bound on the numerator $E[V_i(X_0)]$, while the essential non-negativity of $M$ (with all its eigenvalues having positive real parts) ensures that the denominator $(\exp{M T} l)_i \to +\infty$ as $T \to \infty$. An analogous argument applies to Eq.~\eqref{eq:dim1-bound-l} in Proposition~\ref{dim1}. The claim in this theorem then follows immediately. 
\qed	
\end{proof}
%
%

\subsection{Bounding the Failure Probability over $[0, T]$}
The reduced $T$-safety problem can be solved by existing methods tailored for bounded verification of SDEs, e.g.,~\cite{steinhardt2012finite,DBLP:conf/ccta/SantoyoDC19}. In what follows, we propose an alternative method leveraging \underline{time-dependent} polynomial stochastic barrier certificates. Our method requires constraints (on the barrier certificates) of simpler form compared to~\cite{steinhardt2012finite}; meanwhile, it yields strictly more expressive form of barrier certificates, against the approach on unbounded verification as in~\cite{prajna2004stochastic,DBLP:journals/tac/PrajnaJP07}, thus leading to theoretically non-looser (usually tighter) failure bound. A detailed argument will be given at the end of this section.

The following theorem states a sufficient condition, i.e., a collection of constraints on the time-dependent polynomial stochastic barrier certificates $H(t, \xx)$, under which the failure probability of a stochastic system over a finite time horizon can be explicitly bounded from above.

\begin{theorem}\label{finite_time_condition}
	Suppose there exists a constant $\eta>0$ and a polynomial function (termed \emph{time-dependent stochastic barrier certificate}) $H(t, \xx)\from \mathbb{R} \times \mathbb{R}^n \to \mathbb{R}$, satisfying\footnote{Condition \eqref{finite_time_boundary_condition} is to ensure that when $\tilde{X}_t$ stops at the boundary of $\mathcal{X}$, we still have $\tilde{A} H(t, \xx) \leq 0$ for $\xx \in \boundary \mathcal{X}$. If $\mathcal{X} = \RR^n$, however, this condition can be dropped.}
	\begin{align}
	&H(t, \xx) \geq 0 \quad \text{for } (t, \xx) \in [0, T] \times \mathcal{X},\\
	&\mathcal{A} H(t, \xx) \leq 0 \quad \text{for } (t,\xx) \in [0,T] \times \left(\mathcal{X} \setminus \mathcal{X}_u\right),\label{finite_time_supermartingale_condition}\\
	&\frac{\partial H}{\partial t} \leq 0 \quad \text{for } (t, \xx) \in [0, T] \times \partial\mathcal{X},\label{finite_time_boundary_condition}\\
	&H(t, \xx) \geq \eta \quad \text{for } (t, \xx) \in [0, T] \times \mathcal{X}_u.
	\end{align}
	Then,
	\begin{equation}
	P\left(\exists t \in [0, T]\colon \tilde{X}_t \in \mathcal{X}_u\right) \leq \frac{E[H(0, X_0)]}{\eta}.
	\end{equation}
\end{theorem}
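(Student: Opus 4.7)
\medskip
\noindent\textbf{Proof plan.} The structure of the argument will mirror the proof of Theorem~\ref{barrier_certificate_thm}, but with the exponential supermartingale replaced by the time-dependent barrier itself and with an extra stopping at the first hitting time of $\mathcal{X}_u$. Concretely, introduce the stopping time $\tau_u \define \inf\{t \ge 0 \mid \tilde{X}_t \in \mathcal{X}_u\}$ and the doubly-stopped process $Y_t \define (t \wedge \tau_u, \tilde{X}_{t \wedge \tau_u})$. Observe that the event $\{\exists t \in [0,T]\colon \tilde{X}_t \in \mathcal{X}_u\}$ coincides with $\{\tau_u \le T\}$, on which condition~(4) forces $H(Y_{\tau_u}) \ge \eta$. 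Hence a Doob-type tail bound on $\sup_{0 \le t \le T} H(Y_t)$ will give the claim.

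\medskip
\noindent The key step is to verify that $\{H(Y_t)\}_{t \in [0,T]}$ is a non-negative supermartingale with respect to the natural filtration. Non-negativity is immediate from condition~(1). For the supermartingale property, I would compute the infinitesimal generator of $Y_t$: it vanishes for $t > \tau_u$, it agrees with $\mathcal{A} H(t,\xx)$ for $\xx \in (\mathcal{X} \setminus \mathcal{X}_u)^\interior$ (which is $\le 0$ by condition~(2)), and it equals $\partial H/\partial t$ on $\partial \mathcal{X}$ (which is $\le 0$ by condition~(3), using the identity~\eqref{eq:equal-generator} inherited by the further-stopped process). Thus Dynkin's formula yields, for $0 \le s \le t \le T$,
\begin{equation*}
E\!\left[H(Y_t) \bigm| \mathcal{F}_s\right] = H(Y_s) + E\!\left[\int_s^t \tilde{\mathcal{A}} H(Y_r) \dif r \bigm| \mathcal{F}_s\right] \le H(Y_s).
\end{equation*}

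\medskip
\noindent For unbounded $\mathcal{X}$ (so that $H$ need not be compactly supported), I would insert the same localization trick used earlier: define $\tau_\delta \define \inf\{t \mid H(Y_t) \ge \delta\}$, apply Dynkin's formula to the further-stopped process $Y_{t \wedge \tau_\delta}$ (which now has bounded range and compactly-supported $H$ up to extension), and then pass to the limit $\delta \to \infty$ via Fatou's lemma, exactly as in the proof of Theorem~\ref{barrier_certificate_thm}. This yields the supermartingale property in full generality.

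\medskip
\noindent Finally, applying Doob's supermartingale inequality (Lemma~\ref{maximal_inequlity}) to $\{H(Y_t)\}$ with threshold $\eta$ gives
\begin{equation*}
P\!\left(\sup_{0 \le t \le T} H(Y_t) \ge \eta\right) \le \frac{E[H(Y_0)]}{\eta} = \frac{E[H(0, X_0)]}{\eta},
\end{equation*}
and since $\{\tau_u \le T\} \subseteq \{\sup_{0 \le t \le T} H(Y_t) \ge \eta\}$ by condition~(4), the desired bound follows. The main obstacle I anticipate is purely technical: justifying the interchange of the generator and stopping (as well as the Fatou step for unbounded $\mathcal{X}$) so that $\tilde{\mathcal{A}} H$ is legitimately $\le 0$ almost everywhere along the trajectory; this is handled, as above, by the localization argument already deployed for Theorem~\ref{barrier_certificate_thm}.
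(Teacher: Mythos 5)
Your proposal is correct and follows essentially the same route as the paper's proof: stop the process upon leaving $\mathcal{X}\setminus\mathcal{X}_u$ (the paper additionally caps time at $T$ rather than restricting the index set to $[0,T]$, which is equivalent), verify via the generator case analysis and Dynkin's formula that $H$ evaluated along the stopped process is a non-negative supermartingale, and conclude with Doob's supermartingale inequality at level $\eta$ together with condition~(4). The localization via $\tau_\delta$ and Fatou for unbounded $\mathcal{X}$ is exactly the device the paper invokes (in a footnote) by reference to the proof of Theorem~\ref{barrier_certificate_thm}.
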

\begin{proof}
	Assume in the following that the system evolves within a bounded domain $\mathcal{X}$\footnote{For cases with an unbounded $\mathcal{X}$, the same proof technique of introducing a $\delta$-closed ball as in the proof of Theorem~\ref{barrier_certificate_thm} applies.}. Define a stopping time 
	\[
	\tau_u \define \inf\left\{t \bigm| \tilde{X}_t \notin \mathcal{X} \setminus \mathcal{X}_u\right\},
	\]
	and denote by $X^{(u)}_t \define (t \wedge \tau_u \wedge T, \tilde{X}_{t \wedge \tau_u \wedge T})$ the corresponding stopped process, and by $\mathcal{A}^{(u)}$ the corresponding infinitesimal generator. By Eq.~\eqref{eq:equal-generator}, we have
	\begin{align*}
	\mathcal{A}^{(u)} H\left(X_t^{(u)}\right) &= \mathcal{A}^{(u)} H\left(t \wedge \tau_u \wedge T, \tilde{X}_{t \wedge \tau_u \wedge T}\right)\\
	&=
	\begin{cases}
	0\quad \text{if } t \geq T \vee t \geq \tau_u(\omega),\\
	\mathcal{A} H(t, X_t) \leq 0 \quad \text{if } t < \min\{T, \tau_u(\omega), \tau_\mathcal{X}(\omega)\},\\
	\frac{\partial H}{\partial t}(t, X_t) \leq 0 \quad \text{if } t < \min\{T, \tau_u(\omega)\} \wedge t \geq \tau_\mathcal{X}(\omega).
	\end{cases}
	\end{align*}
	By Dynkin's formula, for fixed $t, h \in [0, T]$, we have
	\begin{align*}
	E\left[H\left(X_{t+h}^{(u)}\right) \bigm| \mathcal{F}_h\right] &= E^{X_{h}^{(u)}} \left[H\left(X_{t+h}^{(u)}\right)\right]\\
	&= E\left[H\left(X_{h}^{(u)}\right)\right] + E^{X_{h}^{(u)}}\left[\int_0^t \mathcal{A}^{(u)} H\left(X_s^{(u)}\right) \dif s\right]\\
	&\leq E\left[H\left(X_{h}^{(u)}\right)\right].
	\end{align*}
	Thus $H(X_{t}^{(u)})$ is a non-negative supermartingale. Then by Doob's maximal inequality in Lemma~\ref{maximal_inequlity}, we have
	\begin{align*}
	P\left(\exists t \in [0, T]\colon \tilde{X}_t \in \mathcal{X}_u\right) &= P\left(\exists t \ge 0\colon \tilde{X}_{t \wedge \tau_u \wedge T} \in \mathcal{X}_u\right)\\
	&\leq P\left(\exists t \geq 0\colon H\left(X_{t}^{(u)}\right) \geq \eta\right)\\
	&\leq \frac{E[H(0, X_0)]}{\eta}.
	\end{align*}
	This completes the proof.
	\qed
\end{proof}

The following fact is then immediately obvious:

\begin{corollary}\label{finite_time_conclusion}
Suppose the conditions in Theorem~\ref{finite_time_condition} hold, and there exists $\beta > 0$, s.t. $H(0, \xx) \leq \beta$ for $\xx \in \mathcal{X}_0$. Then,
\begin{equation*}
    P\left(\exists t \in [0, T]\colon \tilde{X}_t \in \mathcal{X}_u\right) \leq \frac{\beta}{\eta}.
\end{equation*}
\end{corollary}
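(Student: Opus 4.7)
The plan is to apply Theorem~\ref{finite_time_condition} directly and then bound the expectation $E[H(0, X_0)]$ by the uniform upper bound $\beta$ on $H(0, \cdot)$ over $\mathcal{X}_0$. Since the conditions of Theorem~\ref{finite_time_condition} are assumed to hold, we immediately get
\begin{equation*}
P\left(\exists t \in [0, T]\colon \tilde{X}_t \in \mathcal{X}_u\right) \leq \frac{E[H(0, X_0)]}{\eta},
\end{equation*}
so the remaining task is purely to show $E[H(0, X_0)] \le \beta$.

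For that step, I would invoke the standing assumption on initial conditions from the problem formulation, namely that the support of $X_0$ lies within $\mathcal{X}_0$. Then $H(0, X_0(\omega)) \le \beta$ almost surely, because the event $\{X_0 \in \mathcal{X}_0\}$ has probability one and on this event $H(0, X_0) \le \beta$ by hypothesis. Monotonicity of expectation yields $E[H(0, X_0)] \le E[\beta] = \beta$. Substituting back into the bound from Theorem~\ref{finite_time_condition} gives the desired inequality $\beta/\eta$.

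The proof is essentially a one-line consequence of the preceding theorem; no obstacle is anticipated. The only subtlety worth flagging is the tacit use of the initial-state support condition from the $\infty$-safety problem setup (so that $\beta$ really dominates $H(0, X_0)$ almost surely), and of non-negativity of $H$ (which is guaranteed by the first hypothesis of Theorem~\ref{finite_time_condition}) to ensure the expectation is well-defined and that the Doob-type bound being inherited is meaningful.
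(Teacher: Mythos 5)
Your proof is correct and follows exactly the route the paper intends: the paper's own proof is the one-liner ``direct consequence of Theorem~\ref{finite_time_condition},'' and your elaboration (bounding $E[H(0,X_0)]$ by $\beta$ via the standing assumption that $\supp(\mu_{X_0}) \subseteq \mathcal{X}_0$, then substituting into the theorem's bound) is precisely the argument being left implicit. Nothing further is needed.
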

\begin{proof}
This is a direct consequence of Theorem~\ref{finite_time_condition}.
	\qed	
\end{proof}

\paragraph*{Remarks on Potentially Tighter Bound.}
There exists already in the literature a barrier certificate-based method proposed in~\cite{prajna2004stochastic,DBLP:journals/tac/PrajnaJP07} that can deal with the $\infty$-safety problem. It is worth highlighting, however, that our bound on the overall failure probability derived from Proposition~\ref{dim1},~\ref{high_dim} and Theorem~\ref{finite_time_condition} (with appropriate $\tilde{T}$ chosen) is at least as tight as (and usually tighter than, as can be seen later in the experiments) that in~\cite{prajna2004stochastic,DBLP:journals/tac/PrajnaJP07}. The reasons are twofold: (1) the reduction to a finite-time horizon $\tilde{T}$-safety problem substantially ``trims off'' verification efforts pertaining to $t > \tilde{T}$; (2) our method for the reduced $\tilde{T}$-safety problem admits time-dependent barrier certificates, which are strictly more expressive than those time-independent ones exploited in~\cite{prajna2004stochastic,DBLP:journals/tac/PrajnaJP07}, in the sense that any feasible solution thereof shall also be a feasible solution satisfying Theorem~\ref{finite_time_condition}.

\begin{remark}
Roughly speaking, by setting the diffusion coefficients $\sigma$ in SDEs to zero, our method applies trivially to ODE dynamics with either a known or an unknown probability distribution over the initial set of states. For the former, we can even obtain a tighter bound on the failure probability, since in this case we do not need to compute a bound on the barrier certificate over all possible initial distributions.
\end{remark}
\section{Synthesizing Stochastic Barrier Certificates Using SDP}\label{sec:synthesis}

In this section, we encode the synthesis of the aforementioned exponential and time-dependent stochastic barrier certificates into semidefinite programming~\cite{wolkowicz2012handbook} optimizations, and thus a solution thereof yields an upper bound on the failure probability over the infinite-time horizon. Specifically, an SDP problem is formulated, for each of the two barrier certificates, to encode the constraints for ``being an exponential/time-dependent stochastic barrier certificate'', while in the meantime optimizing the tightness of the failure probability bound.
%

It is worth noting that SDP is a generalization of the standard linear programming in which the element-wise non-negativity constraints are replaced by a generalized inequality w.r.t. the cone of positive semidefinite matrices. The generalization preserves \emph{convexity}, leading to the fact that SDP admits polynomial-time algorithms, say the well-known \emph{interior-point methods}, that can efficiently solve the synthesis problem, albeit numerically. We remark that the numerical computation employed in off-the-shelf SDP solvers and the use of interior-point algorithms may potentially lead to erroneous results and thereby unsoundness in the verification/synthesis results. There have been numerous attempts to validate the results from the solver through a-posteriori numerical verification of the solution. For more details, we refer the readers to~\cite{DBLP:journals/fmsd/RouxVS18} and the references therein.

\paragraph*{Exponential Stochastic Barrier Certificate $V(\xx)$.}
To encode the synthesis problem into an SDP optimization, we first fix the dimension $m$ together with $\Lambda$ satisfying Proposition~\ref{dim1} or~\ref{high_dim} (depending on $m$), and then assume a polynomial template $V^a(\xx)$ of certain degree $k$ with unknown parameters $a$, as the barrier certificate to be discovered. It then suffices to solve the following SDP problem\footnote{SDP problems in this paper refer to those that can be readily translated into the standard form of SDP, through, e.g., Stengle's Positivstellensatz~\cite{stengle1974nullstellensatz} and sum-of-squares decomposition~\cite{Parillo/2003/Semidefinite}.}:
\begin{align}
\underset{a, \alpha}{\minimize}\quad &\alpha\label{eq:sdp-exponential-obj}\\
\subj\quad &V^a(\xx)\geq \zz \for \xx \in \mathcal{X}\label{eq:sdp-exponential-cons1}\\
&\mathcal{A} V^a(\xx)\leq -\Lambda V^a(\xx)\for \xx \in \mathcal{X}\label{eq:sdp-exponential-cons2}\\
&\Lambda V^a(\xx)\leq \zz \for \xx \in \partial \mathcal{X}\label{eq:sdp-exponential-cons3}\\
&V^a(\xx)\geq \one \for \xx \in \mathcal{X}_u\label{eq:sdp-exponential-cons4}\\
&V^a(\xx)\leq \alpha\one \for \xx \in \mathcal{X}_0\label{eq:sdp-exponential-cons5}
\end{align}%
Here, the constraints~\eqref{eq:sdp-exponential-cons1}--\eqref{eq:sdp-exponential-cons3} encode the definition of an exponential stochastic barrier certificate (cf. Theorem~\ref{barrier_certificate_thm}), while constraint~\eqref{eq:sdp-exponential-cons4} (resp.,~\eqref{eq:sdp-exponential-cons5}) corresponds to the lower (resp., upper) bound of $V(\xx)$ as in Proposition~\ref{dim1} and~\ref{high_dim} (resp., Theorem~\ref{vector_barrier_conclusion})\footnote{The lower bound $l$ of $V(\xx)$ in Proposition~\ref{dim1} and~\ref{high_dim} is normalized to a vector with all its components no less than 1, based on the observation that, for any $c > 0$, $ V^a(\xx)$ is a feasible solution implies $c V^a(\xx)$ is also a feasible solution.}. Hence, minimizing the upper bound $\alpha$ of (each component of) $V^a(\xx)$ gives a tight exponentially decreasing bound on the tail failure probability, as claimed in Proposition~\ref{dim1} and~\ref{high_dim}.
%

\begin{remark}
If $\Lambda$ is chosen as a non-negative matrix, the combination of condition~\eqref{eq:sdp-exponential-cons1} and~\eqref{eq:sdp-exponential-cons3} will force $V^a(\xx) = \zz$ for $\xx \in \partial\mathcal{X}$, whereof the strict equality may be violated due to numerical computations in SDP. In practice, however, this issue can be well addressed by looking for a barrier certificate of the form $g(\xx)V(\xx)$, where $g(\xx)$ satisfies $\partial \mathcal{X} \subseteq \{\xx \mid g(\xx) = 0\}$, namely, an overapproximation of the boundary of $\mathcal{X}$.
\end{remark}

\begin{remark}
The choice of $m$ is arbitrary, while the choices of $\Lambda$ and $k$ can be heuristic: If $\Lambda_1$ admits no feasible solution, neither will $\Lambda_2 \ge \Lambda_1$ (point-wise, with all the rest parameters fixed); similarly, if $k_1$ admits no feasible solution, neither will $k_2 \le k_1$ (with all the rest parameters fixed). Therefore, one may decrease $\Lambda$ (say, by a half) or increase $k$ (say, by one) whenever a valid barrier certificate was not found.
\end{remark}

\paragraph*{Time-Dependent Stochastic Barrier Certificate $H(t, \xx)$.}
Given the results established in Sect.~\ref{sec:reduction}, the corresponding synthesis problem can be analogously encoded as the following SDP problem:
\begin{align}
\underset{b, \beta}{\minimize}\quad &\beta\label{eq:sdp-dependent-obj}\\
\subj\quad &H^b(t, \xx) \geq 0 \for (t, \xx) \in [0, T] \times \mathcal{X}\label{eq:sdp-dependent-cons1}\\
&\mathcal{A} H^b(t, \xx) \leq 0 \for (t,\xx) \in [0,T] \times \left(\mathcal{X} \setminus \mathcal{X}_u\right)\label{eq:sdp-dependent-cons2}\\
&\frac{\partial H^b}{\partial t} \leq 0 \for (t, \xx) \in [0, T] \times \partial\mathcal{X}\label{eq:sdp-dependent-cons3}\\
&H^b(t,\xx)\geq 1 \for (t,\xx)\in [0,T]\times \mathcal{X}_u\label{eq:sdp-dependent-cons4}\\
&H^b(0,\xx)\leq \beta \for \xx\in \mathcal{X}_0\label{eq:sdp-dependent-cons5}
\end{align}%
Similarly, the constraints~\eqref{eq:sdp-dependent-cons1}--\eqref{eq:sdp-dependent-cons4} encode the definition of a time-dependent stochastic barrier certificate (cf. Theorem~\ref{finite_time_condition}), while constraint~\eqref{eq:sdp-dependent-cons5} corresponds to the upper bound of $H(t, \xx)$ as in Corollary~\ref{finite_time_conclusion} (with $\eta$ being normalized to 1, as in constraint~\eqref{eq:sdp-dependent-cons4}). Consequently, minimizing the upper bound $\beta$ of $H^b(t, \xx)$ produces a tight bound on the failure probability over the reduced finite-time horizon, as stated in Corollary~\ref{finite_time_conclusion}.

\begin{remark}
The state-of-the-art interior-point methods solve an SDP problem up to an error $\varepsilon$ in time that is polynomial in the program description size (number of variables) and $\log(1/\varepsilon)$. The former is exponential in the degree of $V^a$ and $H^b$, as it corresponds to the number of monomials in the template polynomials.
\end{remark}
\section{Implementation and Experimental Results}\label{sec:experiments}

To further demonstrate the practical performance of our approach, we have carried out a prototypical implementation in~\textsc{Matlab} R2019b, with the toolbox~\textsc{Yalmip}~\cite{Lofberg2004} and~\textsc{Mosek}~\cite{DBLP:journals/mp/AndersenRT03} equipped for formulating and solving the underlying SDP problems. Given an $\infty$-safety problem as input, our implementation works toward an upper bound on the failure probability over the infinite time horizon, leveraging the reduction to a $T$-safety problem based on a computed exponentially decreasing bound on the tail failure probability. A collection of benchmark examples from the literature has been evaluated on a 1.8GHz Intel Core-i7 processor with 8GB RAM running 64-bit Windows 10. Each of the examples has been successfully tackled within 30 seconds. In what follows, we demonstrate the applicability of our techniques to SDEs featuring different dimensionalities and nonlinear dynamics, and show particularly that our approach usually produces tighter bounds compared to existing methods.
 
\begin{example}[Population growth \cite{panik2017stochastic}] Consider the stochastic system
\begin{equation*}
\dif X_t = b\left(X_t\right)\dif t + \sigma\left(X_t\right) \dif W_t,
\end{equation*}
which is a stochastic model of population dynamics subject to random fluctuations that, possibly, can be attributed to extraneous or chance factors such as the weather, location, and the general environment. Suppose that the state space is restricted within $\mathcal{X} = \{\xx \mid \xx \ge 0\}$ with $b(X_t) = -X_t$ and $\sigma(X_t) = \sqrt{2}/{2}X_t$. We instantiate the $\infty$-safety problem as $\mathcal{X}_0 = \{\xx \mid \xx = 1\}$ and $\mathcal{X}_u = \{\xx \mid \xx \ge 2\}$, namely, we expect that the population does not diverge beyond 2.

Let $\Lambda = 1$ (with $m = 1$) and set the polynomial template degree of the exponential stochastic barrier certificate $V^a(\xx)$ to $4$, the SDP solver gives \begin{align*}
V^a(\xx) = &~0.000001474596322 - 0.000044643990040 \xx\\
&+ 0.125023372121222 \xx^2 + 0.000000001430428 \xx^3,
\end{align*}%
which satisfies
\[V^a(\xx) \geq 1 ~~\text{for}~ \xx \in \mathcal{X}_u \quad \text{and} \quad V^a(\xx) \leq 0.12498 ~~\text{for}~ \xx \in \mathcal{X}_0.\]
Thus by Proposition~\ref{dim1}, we obtain the exponentially decreasing bound
\begin{equation*}
P\left(\exists t \geq T\colon \tilde{X}_t \in \mathcal{X}_u\right) \leq \frac{0.12498}{\exp{T}} \quad \text{for all } T>0.
\end{equation*}%
The user then may choose any $T > 0$ and solve the reduced $T$-safety problem. As depicted in the left of Fig.~\ref{fig:tightness}, different choices lead to different bounds on the failure probability. Nevertheless, one may surely select an appropriate $T$ that yields a way tighter overall bound on the failure probability than that produced by the method in~\cite{prajna2004stochastic,DBLP:journals/tac/PrajnaJP07}.
\end{example}

\begin{figure}[t]
\centering
\begin{minipage}[b]{.45\linewidth}
\begin{tikzpicture}
\draw (0, 0) node[inner sep=0] {\includegraphics[width=1\linewidth]{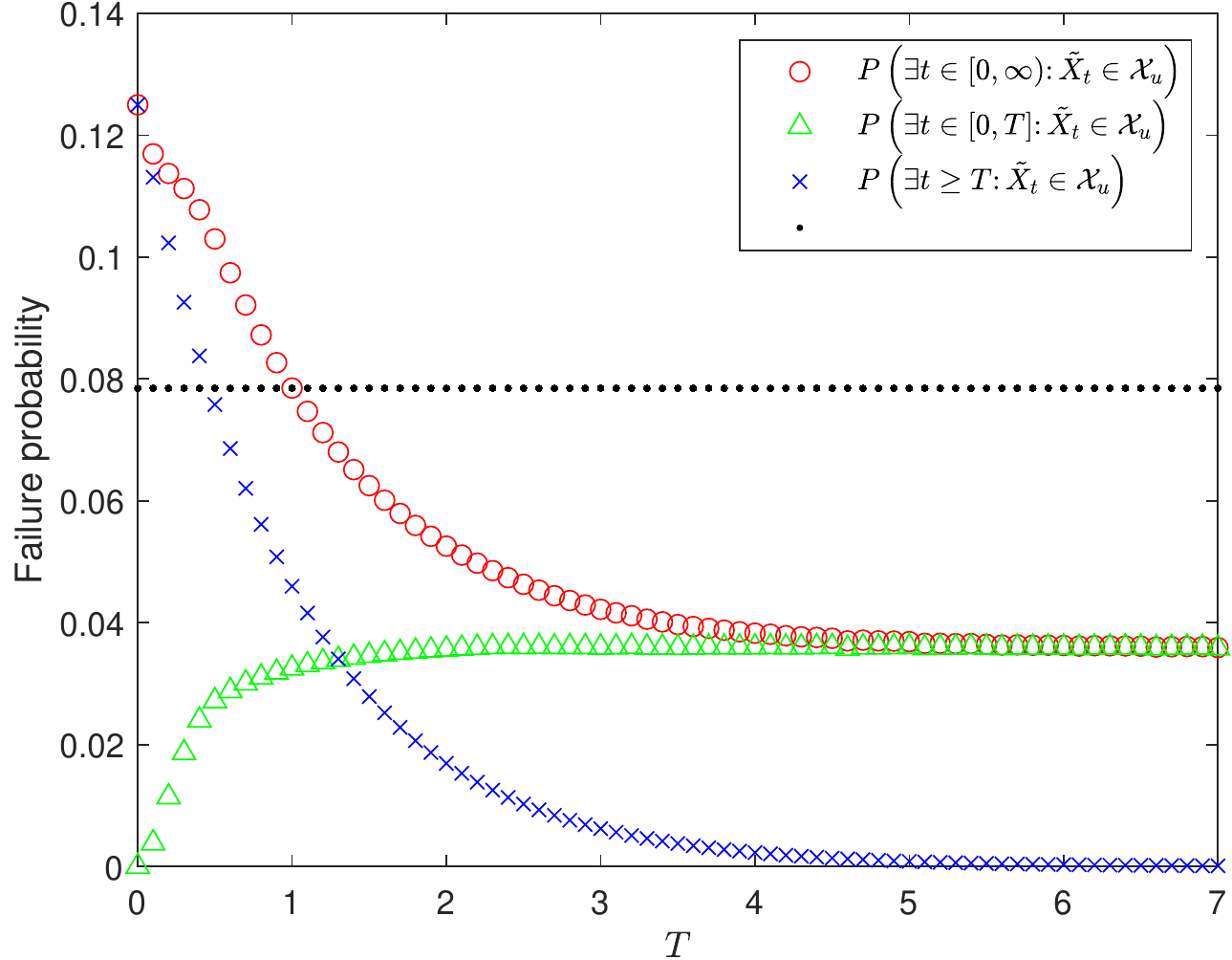}};
\draw (1.69, 1.11) node {\tiny bound by~\cite{prajna2004stochastic,DBLP:journals/tac/PrajnaJP07}};
\end{tikzpicture}
\end{minipage}
$\quad$
\begin{minipage}[b]{.45\linewidth}
\begin{tikzpicture}
\draw (0, 0) node[inner sep=0] (image) {\includegraphics[width=1\linewidth]{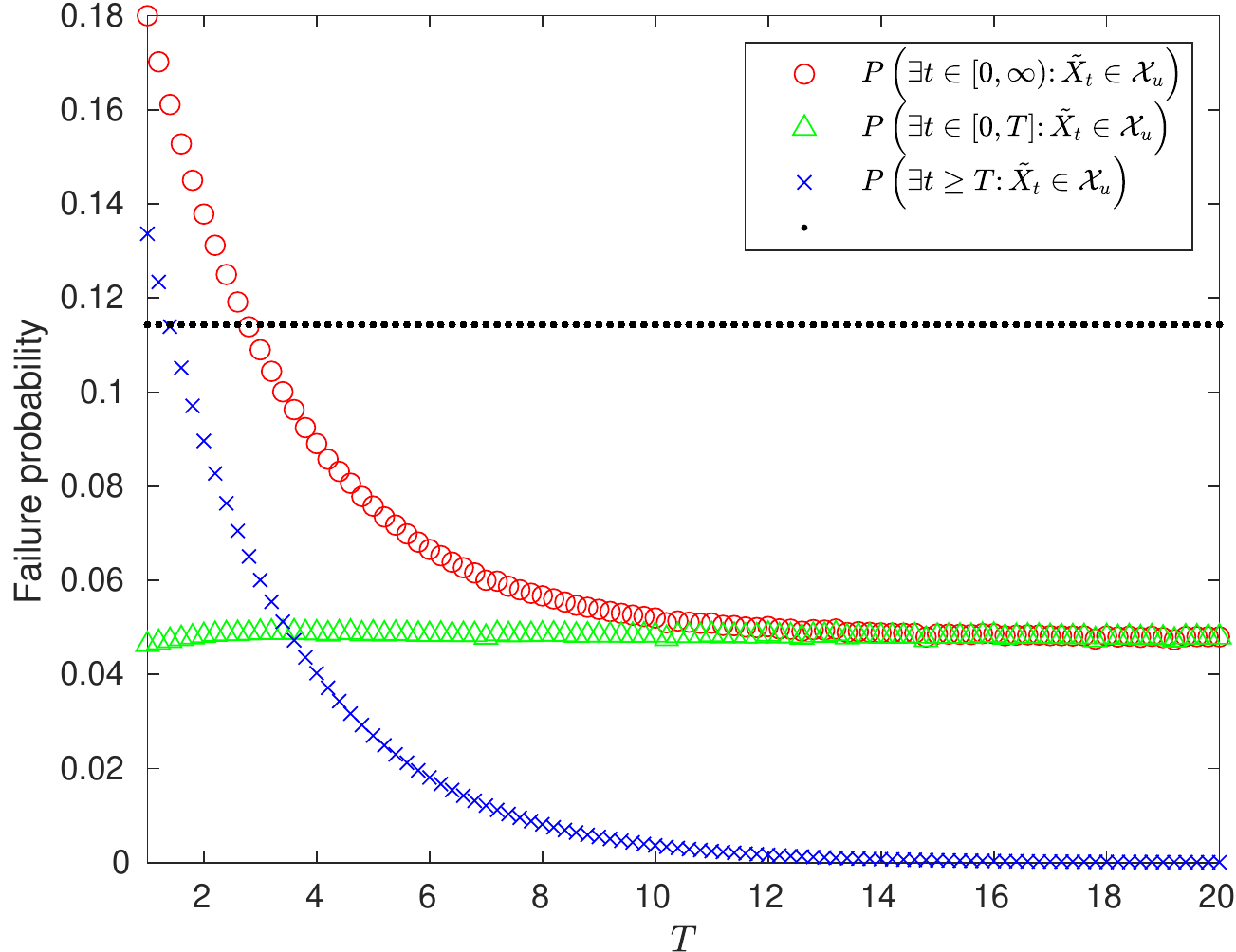}};
\draw (1.68, 1.11) node {\tiny bound by~\cite{prajna2004stochastic,DBLP:journals/tac/PrajnaJP07}};
\end{tikzpicture}
\end{minipage}
\caption{Different choices of $T$ lead to different bounds on the failure probability (with the time-dependent stochastic barrier certificates of degree $4$). Note that `$\textcolor{red}{\circ}$' $=$ `$\textcolor{blue}{\times}$' + `\raisebox{-.15mm}{$\textcolor{green}{\triangle}$}' and `$\vcenter{\hbox{\tiny$\bullet$}}$' depicts the overall bound on the failure probability produced by the method in~\cite{prajna2004stochastic,DBLP:journals/tac/PrajnaJP07}.}
\label{fig:tightness}
\end{figure}

\begin{example}[Harmonic oscillator~\cite{hafstein2018lyapunov}]
Consider a two-dimensional harmonic oscillator with noisy damping:
\begin{equation*}
\dif X_t = 
\begin{pmatrix}
0 & \omega\\
-\omega & -k
\end{pmatrix}
X_t\dif t +
\begin{pmatrix}
0 & 0\\
0 & -\sigma
\end{pmatrix}
X_t \dif W_t,\\
\end{equation*}%
with constants $\omega = 1, k = 7$ and $\sigma = 2$.
We instantiate the $\infty$-safety problem as $\mathcal{X} = \mathbb{R}^n$, $\mathcal{X}_0 = \{(x_1, x_2) \mid -1.2\leq x_1\leq 0.8, -0.6\leq x_2\leq 0.4\}$ and $\mathcal{X}_u=\{(x_1,x_2) \mid \abs{x_1} \ge 2\}$.

Let
$\Lambda =
\begin{pmatrix}
0.45 & 0.1\\
0.1 & 0.45
\end{pmatrix}
$%
and set the polynomial template degree of the exponential stochastic barrier certificate $V^a(\xx)$ to $4$, the SDP solver produces a two-dimensional $V^a(\xx)$ (abbreviated for clear presentation) satisfying
\[
V^a(\xx) \leq
\begin{pmatrix}
0.19946\\
0.19946
\end{pmatrix}
~~\text{for}~\xx \in \mathcal{X}_0 \quad \text{and} \quad V^a(\xx) \geq l =
\begin{pmatrix}
1.000237\\
1.000236
\end{pmatrix}
~~\text{for}~\xx \in \mathcal{X}_u.
\]
According to the proof of Proposition~\ref{high_dim}, we set
$M =
\begin{pmatrix}
0.3 & 0.1\\
0.1 & 0.3
\end{pmatrix}
$%
and aim to find $T^* \ge 0$ such that for all $T \ge T^*$, 
\begin{equation}\label{eq:oscillator-T}
\sup_{t \geq 0}\left(\exp{-\Lambda t}\exp{-(\Lambda-M)T}
\begin{pmatrix}
1.000237\\
1.000236
\end{pmatrix}
\right) \leq
\begin{pmatrix}
1.000237\\
1.000236
\end{pmatrix}.
\end{equation}%
Symbolic computation on the matrix exponential gives
\begin{small}
\begin{align*}
\sup_{t \geq 0}\left(
\!
\exp{-\Lambda t}\exp{-(\Lambda-M)T}
\!
\begin{pmatrix}
1.000237\\
1.000236
\end{pmatrix}
\!\!
\right)
\!&=
\sup_{t\geq0} 
\begin{pmatrix}
\exp{-0.15 T} (1.0002365 \exp{-0.55 t} + 0.0000005 \exp{-0.35 t})\\
\exp{-0.15 T} (1.0002365 \exp{-0.55 t} - 0.0000005 \exp{-0.35 t})
\end{pmatrix}\\
&\leq
\begin{pmatrix}
1.0002365\exp{-0.15T}\\
1.0002365\exp{-0.15T}
\end{pmatrix}.
\end{align*}%
\end{small}%
Therefore, $T^* = 1$ satisfies condition~\eqref{eq:oscillator-T}. Further by Corollary~\ref{high_dim}, for any $T \geq T^* = 1$, we have
\[
P\left(\exists t \geq T\colon \tilde{X}_t \in \mathcal{X}_u\right) \leq \frac{E[V_1(X_0)]}{(\exp{M T}l)_1} \leq \frac{0.19946}{0.0000005 \exp{0.2 T} + 1.00024 \exp{0.4 T}}.
\]
Analogously, a comparison with existing methods concerning the tightness of the synthesized failure probability bound (under different choices of $T$) is shown in the right of Fig.~\ref{fig:tightness}.
\end{example}

\begin{example}[Nonlinear drift~\cite{prajna2004stochastic}] We consider in this example a stochastic system involving nonlinear dynamics in its drift coefficient:
\begin{align*}
\dif x_1(t) &= x_2(t) \dif t\\
\dif x_2(t) &= - x_1(t) - x_2(t) - 0.5 x_1^3(t) \dif t + 0.1 \dif W_t.
\end{align*}%
As in~\cite{prajna2004stochastic}, let $\mathcal{X} = \{(x_1,x_2) \mid \abs{x_1} \leq 3, \abs{x_2} \leq 3, x_1^2+x_2^2 \geq 0.5^2\}$, $\mathcal{X}_0 = \{(x_1, x_2) \mid (x_1+2)^2+x_2^2 \leq 0.1^2\}$ and $\mathcal{X}_u = \{(x_1,x_2) \in \mathcal{X} \mid x_2 \geq 2.25\}$. With $\Lambda=1.5$ ($m=1$), we obtain an exponential stochastic barrier certificate $V^a(\xx)$ of degree $8$ satisfying
\[V^a(\xx) \leq 4.00014~~\text{for}~\xx \in \mathcal{X}_0 \quad \text{and} \quad V^a(\xx) \geq 1.05248~~\text{for}~\xx \in \mathcal{X}_u.\]
Thus by Corollary~\ref{dim1}, we have for any $T \geq 0$,
\[P\left(\exists t \geq T\colon \tilde{X}_t \in \mathcal{X}_u\right) \leq \frac{3.80070}{\exp{1.5 T}}.\]
Setting, for instance, $T = 6$, we have
\begin{equation*}
P\left(\exists t \geq 0\colon \tilde{X}_t \in \mathcal{X}_u\right) \leq P\left(\exists t \in [0, 6]\colon \tilde{X}_t \in \mathcal{X}_u\right) + \frac{3.80070}{\exp{9}}.
\end{equation*}%
For the reduced $T$-safety problem with $T = 6$, a time-dependent stochastic barrier certificate of degree $8$ is synthesized, thereby yielding $P\left(\exists t \in [0, 6]\colon \tilde{X}_t \in \mathcal{X}_u\right) \leq 0.196124$, thus together we get 
\begin{equation*}
P\left(\exists t \geq 0\colon \tilde{X}_t \in \mathcal{X}_u\right) \leq 0.196593,
\end{equation*}%
which is tighter than $0.265388$ produced (on the same machine) by the method in~\cite{prajna2004stochastic} under the same template degree.
\end{example}
\section{Conclusion}\label{sec:conclusion}

We proposed a constructive method, based on the synthesis of stochastic barrier certificates, for computing an exponentially decreasing upper bound, if existent, on the tail probability that an SDE system violates a given safety specification. We showed that such an upper bound facilitates a reduction of the verification problem over an unbounded temporal horizon to that over a bounded one. Preliminary experimental results on a set of interesting examples from the literature demonstrated the effectiveness of the reduction and that our method often produces tighter bounds on the failure probability.

For future work, we plan to investigate a possible convergence result in the sense that the derived failure probability bound may converge to the exact one as increasing the degree of the barrier certificates. Extending our technique to tackle SDEs with control inputs will also be of interest. Moreover, checking whether a given parametric (polynomial) formula keeps probabilistic invariance plays a central in the verification of SDEs. Several kinds of sufficient conditions on probabilistic barrier certificates were proposed, including the ones given in this paper. It consequently deserves to investigate a necessary and sufficient condition for checking the probabilistic invariance of a given template, like for ODEs in~\cite{LZZ11}. Apart from that, we are interested in carrying our results to the verification of probabilistic programs without conditioning, which can be viewed as discrete-time stochastic dynamics.


\bibliographystyle{abbrv}
\bibliography{references}

\end{document}